\chardef\bslash=`\\ 
\newtheorem{thm}{Theorem}[section]
\newtheorem{cor}[thm]{Corollary}
\newtheorem{lem}[thm]{Lemma}
\newtheorem{prop}[thm]{Proposition}
\numberwithin{equation}{section}
\theoremstyle{definition}
\newtheorem{defn}[thm]{Definition}
\newtheorem{remark}[thm]{Remark}
\newtheorem{example}[thm]{Example}
\theoremstyle{remark}
\newcommand{\CC}{\mathbbm{C}}
\newcommand{\RR}{\mathbbm{R}}
\newcommand{\ZZ}{\mathbbm{Z}}
\newcommand{\id}{\mathrm{id}}
\newcommand{\ad}{\mathrm{ad}}
\newcommand{\Ad}{\mathrm{Ad}}
\newcommand{\mat}{\mathrm{Mat}}
\newcommand{\Iso}{\mathrm{Iso}}
\newcommand{\GL}{\mathrm{GL}}
\newcommand{\hol}{\mathrm{Hol}}
\DeclareMathOperator{\im}{\mathrm{im}}
\DeclareMathOperator{\rk}{\mathrm{rank}}
\newcommand{\frg}{\mathfrak{g}}
\newcommand{\frh}{\mathfrak{h}}
\newcommand{\fra}{\mathfrak{a}}
\newcommand{\frn}{\mathfrak{n}}
\newcommand{\frb}{\mathfrak{b}}
\newcommand{\frz}{\mathfrak{z}}
\newcommand{\frw}{\mathfrak{w}}
\newcommand{\frr}{\mathfrak{r}}
\newcommand{\frv}{\mathfrak{v}}
\newcommand{\butt}{\mathfrak{b}_6}
\renewcommand{\d}{\mathrm{d}}
\newcommand{\eval}[2][\right]{\relax
  \ifx#1\right\relax \left.\fi#2#1\rvert}
\begin{document}
\title[Flat Homogeneous Spaces]{On the Geometry of Flat Pseudo-Riemannian Homogeneous Spaces}
\author{Wolfgang Globke}
\address{School of Mathematical Sciences\\
The University of Adelaide\\
SA 5005, Australia}
\email{\sf wolfgang.globke@adelaide.edu.au}


\begin{abstract}
Let $M=\RR^n_s/\Gamma$ be complete flat pseudo-Riemannian homogeneous mani\-fold,
$\Gamma\subset\Iso(\RR^n_s)$ its fundamental group and
$G$ the Zariski closure of $\Gamma$ in $\Iso(\RR^n_s)$.
We show that the $G$-orbits in $\RR^n_s$
are affine subspaces and affinely diffeomorphic to $G$ endowed
with the (0)-connection.
If the restriction of the pseudo-scalar product on $\RR^n_s$ to
the $G$-orbits is non-degenerate, then
$M$ has abelian linear holonomy.
If additionally $G$ is not abelian, then $G$
contains a certain subgroup of dimension $6$. In particular,
for non-abelian $G$,
orbits with non-degenerate metric can appear only if $\dim G\geq 6$.
Moreover, we show that $\RR^n_s$ is a trivial algebraic principal
bundle $G\to M\to\RR^{n-k}$.
As a consquence, $M$ is a trivial smooth bundle $G/\Gamma\to M\to\RR^{n-k}$ with compact fiber $G/\Gamma$.
\end{abstract}

\maketitle
\tableofcontents

\section{Introduction}

A geodesically complete flat pseudo-Riemannian homogeneous manifold $M$ 
with signature $(n-s,s)$ is of the
form $M=\RR^n_s/\Gamma$, where $\RR^n_s$ denotes $\RR^n$ endowed
with a non-degenerate symmetric bilinear form of signature
$(n-s,s)$, and
the fundamental group $\Gamma$ is a 2-step nilpotent unipotent
group of isometries.

These spaces were first studied by Joseph Wolf in
\cite{Wolf_1}, and
we review some of his results in Section \ref{sec_prelim}.
More recent studies by Oliver Baues and the author
\cite{baues,BG,globke,globke2} investigated among other things the
holonomy groups of these spaces.
For some time only $M$ with abelian linear holonomy
group (given by the linear parts of $\Gamma$) were known, and
it was unknown whether $M$ with non-abelian linear holonomy
existed.
For example, it was shown in \cite{BG} that for compact $M$
the linear holonomy group is always abelian.
The first example of a non-compact $M$ with non-abelian
linear holonomy in dimension $n=14$
was given by the author in his thesis
\cite{globke}, see also \cite{globke2}.

In the present article we study the geometry of geodesically
complete flat
pseudo-Riemannian homogeneous spaces. 
This is somewhat facilitated by considering the action of the
real Zariski closure $G$ of $\Gamma$ in $\Iso(\RR^n_s)$
rather than $\Gamma$ itself. The group $G$ is a unipotent
algebraic group and shares many algebraic properties with
$\Gamma$, as outlined in Section \ref{sec_prelim}.
This is essentially due to the fact that $G$ and $\Gamma$ have
the same transitive centralizer.

The structure of the $G$-orbits is investigated in Section 
\ref{sec_affine}. These orbits are affine subspaces of $\RR^n$
and by Proposition \ref{prop_flat_affine} are affinely diffeomorphic to $G$ endowed with the (0)-connection given by
$\nabla_X Y=\frac{1}{2}[X,Y]$.

In Section \ref{sec_nondeg} we assume the restriction of the pseudo-scalar product to the $G$-orbits to be non-degenerate.
In this case, some rather strong restrictions
on $G$ are imposed, as the quotients under $\Gamma$ of the
$G$-orbits are themselves compact flat
pseudo-Riemannian homo\-geneous spaces
(Proposition \ref{prop_nondeg_abhol}).
As aforementioned, the linear holonomy group of
$M$ is then neccessarily abelian.
Moreover it is shown in Proposition \ref{prop_butterfly_sub}
that if the fundamental group $\Gamma$ (or equivalently $G$)
is not abelian and if the restriction of
the metric to the $G$-orbits is non-degenerate,
then $G$ contains $\mathrm{H}_3\ltimes_{\Ad^*}\frh_3^*$ as a
subgroup,
where $\mathrm{H}_3$ is the Heisenberg group and $\frh_3^*$ its
Lie  algebra dual.
In particular, for non-abelian $G$,
spaces with non-degenerate orbits
can appear only if $\dim G\geq 6$.
We conclude in Corollary \ref{cor_M6L}
that if $\RR^n_s$ has $G$-orbits with
non-degenerate metric, every flat pseudo-Riemannian homogeneous
space $M=\RR^n_s/\Gamma$ fibers into copies of $G/\Gamma$ which
contain a pseudo-Riemannian submanifold
$(\mathrm{H}_3\ltimes_{\Ad^*}\frh_3^*)/\Lambda$, where
$\Lambda$ is a lattice in $\mathrm{H}_3\ltimes_{\Ad^*}\frh_3^*$
contained as a subgroup in $\Gamma$.
These results represent a modest first step towards a classification of those
flat pseudo-Riemannian homogeneous spaces with non-degenerate
orbits.

The orbits are the fibers of a principal $G$-bundle over the quotient
$\RR^n/G$. As $G$ is a unipotent algebraic group acting on
the affine space $\RR^n$, it begs the question of whether
this bundle is in fact a bundle of affine algebraic varieties.

In Section \ref{sec_bundle} we answer this question in
Theorem \ref{thm_principal_bundle}, stating that $\RR^n$ is
a trivial algebraic principal $G$-bundle $G\to\RR^n\to\RR^{n-k}$,
where $k=\dim G$.
As a direct consequence, $M$ is a smooth trivial bundle $G/\Gamma\to M\to\RR^{n-k}$
with compact fiber $G/\Gamma$
(Theorem \ref{thm_pb2}).
Here, the triviality is an established fact about algebraic
actions of unipotent groups, outlined in Appendix
\ref{sec_algebraic}.
But it is not clear a priori that the quotient $\RR^n/G$
exists as an affine variety.
This can be seen in Proposition \ref{prop_UUp_quotient}
by identifying $\RR^n$ with the algebraic
homogeneous space $U/U_p$ and relating the $G$-action to the
action on $U/U_p$ of a certain algebraic subgroup $U'$ of $U$.
By a result of Rosenlicht \cite{rosenlicht4}, $U/U'$ is an affine variety isomorphic to an affine space $\RR^{n-k}$, and it is also
a quotient for the $G$-action on $\RR^n$. So $\RR^n/G$ indeed
exists as an affine variety.

\subsection*{Acknowledgement:}
I wish to thank Oliver Baues
for reading an earlier version of this article and
suggesting improvements to the exposition.
During the preparation of this article I was
supported by the Australian Research Council grant {\small DP120104582}.

\section{Prerequisites on Algebraic Groups and Unipotent Groups}\label{sec_alg_grp}

References for this section are Borel \cite[Chapters I and II]{borel}
and Raghunathan \cite[Preliminaries and Chapter II]{ragh}.

\subsection{Algebraic Groups}

A \emph{linear algebraic group} $G$ is a subgroup of
$\GL_n(\CC)$ defined as the solution set of a system of
polynomial equations in the matrix coefficients.
In other words, $G$ is a subgroup which is also an affine 
variety in $\GL_n(\CC)$.
We will omit the term ``linear'' in the following, as all
groups in question are matrix groups.

If the equations defining $G$ consist of polynomials over a
subfield $K$ of $\CC$, the $G$ is a
\emph{$K$-defined} algebraic group.
The \emph{K-points of $G$} are  $G(K)=G\cap\GL_n(K)$.
Here, we are interested in the case $K=\RR$ and refer to the
$\RR$-points of some algebraic group as a
\emph{real algebraic group}.

\begin{example}
If $b$ is a symmetric bilinear form on $\RR^n$ represented by a
matrix $B$,
then its linear isometry group $\mathrm{O}(b)$ is a real
algebraic group given by
$g^\top B g=B$ which componentwise consists of real
polynomial equations in the matrix coefficients of $g$.
Similarly, the affine isometry group
$\Iso(b)=\mathrm{O}(b)\ltimes\RR^n$ is a real algebraic group,
albeit as a linear subgroup of $\GL_{n+1}(\RR)$ to accommodate
the translations.
\end{example}

\begin{example}
If $b$ is a symmetric bilinear form and
$H$ is any subgroup of $\Iso(b)$, then its centralizer
$\mathrm{Z}_{\Iso(b)}(H)$ is a real algebraic group. Its elements
$g$ must satisfy the equations $g h=h g$ for all $h\in H$, and
these are componentwise real polynomial equations in the
matrix coefficients of $g$ (as a matrix in
$\GL_{n+1}(\RR)$).
Note that this does not require $H$ to be an algebraic group
itself.
\end{example}

For an arbitrary subgroup $G\subset\GL_n(K)$ we call the 
smallest algebraic group $\overline{G}$ containing $G$ the
\emph{Zariski closure of $G$}.
Then $G$ is dense in $\overline{G}$ in the Zariski topology.
This fact often allows to extend properties of $G$ to
$\overline{G}$ by continuity in the Zariski topology.
If $G\subset\GL_n(\RR)$, then its \emph{real Zariski closure}
$\overline{G}(\RR)$ is a real algebraic group.

In Section \ref{sec_bundle} we employ some properties of
algebraic group actions:
A free action of an algebraic group on an affine variety $V$ is
\emph{principal} if the map
$\theta:G\times V\to V\times V$, $(g,x)\mapsto(g.x,x)$
is an algebraic isomorphism. This amounts to saying that the map
$\beta:V\times V\to G$, $(y,x)\mapsto g_{yx}$ with $g_{yx}.x=y$ is 
a morphism (it is well-defined because the action is free).

Let $V$ be an affine $K$-variety and $G$ a $K$-defined algebraic
group acting by $K$-morphisms on $V$.
An affine variety $W$ is called a \emph{geometric quotient}
if there exists a \emph{quotient morphism} $\pi:V\to W$, that is,
$\pi$ is a surjective and open $K$-morphism,
its fibers are the orbits of the $G$-action
and the pullback by $\pi$ induces an algbraic isomorphism
$\overline{K}[\pi(U)]\cong\overline{K}[U]^G$ for each open subset
$U\subseteq V$, where $\overline{K}[U]^G$
is the ring of $G$-invariant regular functions on $U$ and
$\overline{K}$ is the algebraic closure of $K$
(these quotients are called ``geometric'' to distinguish them from
a different concept of quotients called ``algbraic'' which is of 
no interest here).

\subsection{Unipotent Groups}

In this article we are mostly concerned with \emph{uni\-potent}
groups. These are matrix groups $G$ in which
every element $g\in G$ is a unipotent matrix, that is,
$(g-I)^k=0$ holds for some $k>0$,
where $I$ denotes the identity matrix.
If $G$ is unipotent, its Zariski closure and real Zariski closure
(if applicable) are also unipotent by continuity.

\begin{example}
Let $G$ be a unipotent Lie group with Lie algebra $\frg$.
Then the exponential map $\exp:\frg\to G$ is a polynomial
diffeomorphism and thus
$G$ is an algebraic group. In particular, $G$ coincides with
its Zariski closure.
\end{example}

If $ G $ is a unipotent Lie group, then there exists a connected normal
Lie subgroup $ H $ of codimension $1$ and a subgroup
$ A \cong\mathrm{G}_{\mathrm{a}}$ (the additive group of the field of definition)
such that $ G $ is the semidirect product
\begin{equation}
 G  =  A  \cdot  H .
\label{eq_malcev_opg}
\end{equation}
See Onish\-chik and Vinberg \cite[Chapter 2,
Section 3.1]{OV} for details.

Applying the decomposition (\ref{eq_malcev_opg}) repeatedly and
exploiting the fact that $\exp:\frg\to G$ is a diffeomorphism,
we find the existence of a \emph{Malcev basis} $X_1,\ldots,X_k$
of $\frg$, which is a basis such that every $g\in G$ can be
written as
\begin{equation}
g=g(t_1,\ldots,t_k)=\exp(t_1 X_1+\ldots+t_k X_k)
\label{eq_exp_coord}
\end{equation}
for unique real parameters $t_1,\ldots,t_k$, the
\emph{exponential coordinates}.

If $\Gamma$ is a discrete subgroup of $G$, then $G$ is the Zariski
closure of $\Gamma$ if and only if $\Gamma$ is a lattice
(meaning $G/\Gamma$ is compact).
In this case, the dimension of $G$ equals the \emph{rank} (or
\emph{Hirsch length}) of $\Gamma$ (see Raghunathan \cite[Theorem 2.10]{ragh}).

\section{Prerequisites on Flat Homogeneous Spaces}\label{sec_prelim}

Let $\RR^n_s$ denote the space $\RR^n$ endowed with a non-degenerate symmetric
bilinear form of signature $(n-s,s)$ and $\Iso(\RR^n_s)$ its group of isometries. We will assume $n-s\geq s$ throughout.

Affine maps of $\RR^{n}$ are written 
as $\gamma= (I+A,v)$, where $I+A$ is the linear part ($I$ the
identity matrix), and $v$ the translation part.
The image space of $A$ is denoted by $\im A$.

Let $M$ denote a complete flat  pseudo-Riemannian 
homogeneous manifold.
Then $M$ is of the form $M=\RR^n_s/\Gamma$
with fundamental group $\Gamma\subset\Iso(\RR^n_s)$.
In particular, $\Gamma$ 
acts without fixed points on $\RR^n_s$.
Homogeneity is determined by the
action of the
centralizer $\mathrm{Z}_{\Iso(\RR^n_s)}(\Gamma)$ of $\Gamma$ in
$\Iso(\RR^n_s)$ (see \cite[Theorem 2.4.17]{Wolf_buch}):

\begin{thm}\label{thm_transitive}
Let $p:\tilde{M}\to M$ be the universal pseudo-Riemannian covering of $M$
and let $\Gamma$ be the group of deck transformations.
Then $M$ is homogeneous if and only if
$\mathrm{Z}_{\Iso(\tilde{M})}(\Gamma)$ acts transitively on $\tilde{M}$.
\end{thm}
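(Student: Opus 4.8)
The plan is to work inside the Lie group $\Iso(\tilde{M})$ and to compare three nested subgroups: the centralizer $\mathrm{Z}:=\mathrm{Z}_{\Iso(\tilde{M})}(\Gamma)$, the normalizer $\mathrm{N}:=\mathrm{N}_{\Iso(\tilde{M})}(\Gamma)$, and $\Gamma$ itself. Since $\Gamma$ acts freely and properly discontinuously, it is a closed discrete subgroup, so $\mathrm{N}$ is a closed, hence Lie, subgroup with open identity component $\mathrm{N}^0$. For the reverse implication I would use that every $g\in\mathrm{Z}\subseteq\mathrm{N}$ descends to a well-defined isometry $\bar g$ of $M=\tilde{M}/\Gamma$ with $\bar g\circ p=p\circ g$; transitivity of $\mathrm{Z}$ on $\tilde{M}$ then passes to transitivity of the induced isometries on $M$ by lifting a pair of points, so $M$ is homogeneous.

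For the forward implication I would first record the identification $\Iso(M)\cong\mathrm{N}/\Gamma$: each isometry $f$ of $M$ lifts to an isometry $\tilde f$ of $\tilde{M}$ with $p\circ\tilde f=f\circ p$, and since $p\circ(\tilde f\gamma\tilde f^{-1})=p$ for all $\gamma\in\Gamma$ this lift normalizes $\Gamma$; conversely every element of $\mathrm{N}$ descends, giving a surjective homomorphism $\mathrm{N}\to\Iso(M)$ with kernel $\Gamma$. The crucial structural point I would then prove is $\mathrm{N}^0\subseteq\mathrm{Z}$: for fixed $\gamma\in\Gamma$ the conjugation map $\mathrm{N}^0\to\Gamma$, $g\mapsto g\gamma g^{-1}$, is continuous with connected source and discrete target, hence constant and equal to $\gamma$, so every element of $\mathrm{N}^0$ commutes with $\gamma$.

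Next I would extract a transitive subgroup inside $\mathrm{N}^0$. Because $M$ is connected, its identity component $\Iso(M)^0$ already acts transitively (the orbits are open by a dimension count, and a connected space admits no partition into several nonempty open orbits). Viewing $\pi:\mathrm{N}\to\mathrm{N}/\Gamma=\Iso(M)$ as a covering homomorphism gives $\Iso(M)^0=\pi(\mathrm{N}^0)$, whence $\mathrm{N}^0\Gamma$ acts transitively on $\tilde{M}$. To descend this to $\mathrm{N}^0$ alone I would note that $\mathrm{N}^0$ is open in $\mathrm{N}^0\Gamma$, so the point stabilizers in the two groups have equal dimension; hence the $\mathrm{N}^0$-orbits have the same (full) dimension as the transitive $\mathrm{N}^0\Gamma$-orbit and are open. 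Since $\mathrm{N}^0$ is normal in $\mathrm{N}^0\Gamma$, the group $\Gamma$ merely permutes these open orbits, and connectedness of $\tilde{M}$ forces a single orbit. Thus $\mathrm{N}^0$, and a fortiori $\mathrm{Z}\supseteq\mathrm{N}^0$, acts transitively.

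The step I expect to be the main obstacle is exactly this passage from the normalizer to the centralizer. Transitivity of $\mathrm{N}$ on $\tilde{M}$ follows almost formally from homogeneity via $\Iso(M)\cong\mathrm{N}/\Gamma$, but the theorem requires the strictly smaller group $\mathrm{Z}$, and a generic lift of an isometry of $M$ need not commute with $\Gamma$. The resolution is the two-stage reduction above: first shrink to $\mathrm{N}^0$, which lands inside $\mathrm{Z}$ by the connectedness-versus-discreteness argument, and then verify that $\mathrm{N}^0$ remains transitive by using the connectedness of $\tilde{M}$ together with the fact that $\Gamma$ only permutes the $\mathrm{N}^0$-orbits.
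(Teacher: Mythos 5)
Your proof is correct; the paper gives no argument for this statement but simply cites Wolf \cite[Theorem 2.4.17]{Wolf_buch}, and your two-step reduction --- identifying $\Iso(M)$ with $\mathrm{N}/\Gamma$ and then observing that the connected component $\mathrm{N}^0$ must centralize the discrete group $\Gamma$ because $g\mapsto g\gamma g^{-1}$ is a continuous map from a connected space to a discrete one --- is exactly the standard proof found there. The one point worth making explicit is that the identification $\Iso(M)\cong\mathrm{N}/\Gamma$ is an isomorphism of topological (Lie) groups, since that is what justifies the equality $\pi(\mathrm{N}^0)=\Iso(M)^0$ on which the passage from homogeneity of $M$ to transitivity of $\mathrm{N}^0\Gamma$ rests.
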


Wolf studied subgroups $G\subset\Iso(\RR^n_s)$ such that $\mathrm{Z}_{\Iso(\RR^n_s)}(G)$
acts transitively on $\RR^n_s$.
To avoid repetitions we shall call them
\emph{Wolf groups} throughout this article.

\begin{remark}
By continuity, the real Zariski closure of a Wolf
group $G$ is also a Wolf group.
Moreover, $G$ is abelian if and only if its Zariski closure
is.
\end{remark}

\begin{prop}
$G$ acts freely and properly on $\RR^n_s$.
\end{prop}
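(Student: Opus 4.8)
The plan is to treat freeness and properness separately. Freeness will be immediate from the defining property of a Wolf group, whereas properness is the substantive point: since $\Iso(\RR^n_s)$ itself does \emph{not} act properly in indefinite signature (the linear isometry group $\mathrm{O}(n-s,s)$ is non-compact), one must genuinely exploit that $G$ is unipotent and algebraic.

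I would first establish that $G$ acts freely by exploiting that $G$ is a Wolf group, so that its centralizer $Z=\mathrm{Z}_{\Iso(\RR^n_s)}(G)$ acts transitively on $\RR^n_s$. Suppose some $g\in G$ fixes a point $x_0$. Given an arbitrary $x\in\RR^n_s$, choose $z\in Z$ with $z\cdot x_0=x$; since $z$ commutes with $g$,
\[
g\cdot x=g\cdot(z\cdot x_0)=z\cdot(g\cdot x_0)=z\cdot x_0=x .
\]
Thus $g$ fixes every point of $\RR^n_s$, and an affine map fixing all of $\RR^n$ is the identity, so $g=\id$. Hence every stabilizer is trivial and $G$ acts freely. (Concretely, writing $g=(I+A,v)$, a fixed point exists precisely when $v\in\im A$, and the argument shows this already forces $g=\id$.)

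For properness the object to analyze is $\theta\colon G\times\RR^n_s\to\RR^n_s\times\RR^n_s$, $(g,x)\mapsto(g\cdot x,x)$, which is injective by freeness, and the goal is to show it is proper in the Euclidean topology. Because $G$ is a unipotent algebraic group acting algebraically on the affine variety $\RR^n_s$, the Kostant--Rosenlicht theorem \cite{borel} guarantees that every $G$-orbit is Zariski closed, hence closed in the metric topology, and the freeness together with Rosenlicht's results \cite{rosenlicht4} shows the action is \emph{principal} in the sense of Section~\ref{sec_alg_grp}: the map $\theta$ is an algebraic isomorphism onto the closed subvariety $R=\im\theta\subset\RR^n_s\times\RR^n_s$, with morphism inverse $\beta\colon R\to G\times\RR^n_s$. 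Since $\beta$ is in particular continuous in the Euclidean topology, $\theta$ is a homeomorphism onto the closed set $R$; thus for any compact $C\subset\RR^n_s\times\RR^n_s$ the set $\theta^{-1}(C)=\beta(C\cap R)$ is the continuous image of the compact set $C\cap R$, hence compact, and $\theta$ is proper.

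The main obstacle is exactly the content of the last paragraph: controlling a sequence $g_n$ when the basepoints $x_n$ and the images $g_n\cdot x_n$ both stay bounded. Orbit by orbit this is easy, since each orbit map $G\to G\cdot x$ is a polynomial bijection onto a closed orbit and therefore a proper embedding; the difficulty is the \emph{uniformity} in the basepoint, where the non-compactness of $\mathrm{O}(n-s,s)$ could a priori allow the linear parts of $g_n$ to escape to infinity. It is precisely the algebraicity of the inverse $\beta$, that is, the principality of free unipotent actions, that furnishes this uniformity automatically and thereby closes the gap.
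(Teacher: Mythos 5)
Your freeness argument is correct and is exactly the paper's, which compresses it to one line (``$G$ acts freely because its centralizer acts transitively''). For properness the paper does not argue at all but cites \cite[Proposition 7.2]{BG}, so you are attempting something genuinely different --- a self-contained proof --- and it has a gap at precisely the point you yourself identify as the crux. Your argument needs the image $R=\im\theta=\{(g.x,x)\mid g\in G,\ x\in\RR^n_s\}$ to be \emph{closed} in $\RR^n_s\times\RR^n_s$, and neither fact you invoke delivers this: Kostant--Rosenlicht gives closedness of each individual orbit $G.x$, i.e.\ of each slice $R\cap(\RR^n_s\times\{x\})$, not of the union over all basepoints; and principality, as defined in Section~\ref{sec_alg_grp}, only says that $\beta$ is a morphism \emph{on} $R$ --- the image of a morphism of affine varieties is constructible, not closed, in general. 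Moreover, closedness of $R$ is essentially equivalent to the uniformity you are after: if $x_m$ and $g_m.x_m$ stay bounded while $g_m\to\infty$, then a subsequence of $(g_m.x_m,x_m)$ converges to a point that cannot lie in $R$ (by freeness, a convergent subsequence of $g_m$ would otherwise be forced), so $R$ fails to be closed exactly when properness fails. Asserting the closedness therefore begs the question.

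The gap is fixable with ingredients already in the paper. By Proposition~\ref{prop_orbit_affine}, $G.p=p+\mathrm{span}\{b_1(p),\ldots,b_k(p)\}$ with $b_i(p)=A_ip+v_i$ depending polynomially on $p$ and linearly independent for \emph{every} $p$ (freeness plus $\dim F_p=k$). Hence $(q,p)\in R$ if and only if the $n\times(k+1)$ matrix $\bigl[\,b_1(p)\ \cdots\ b_k(p)\ \ q-p\,\bigr]$ has rank exactly $k$, equivalently all its $(k+1)\times(k+1)$ minors vanish; this exhibits $R$ as the common zero set of finitely many polynomials in $(q,p)$, hence Zariski closed and in particular closed in the Euclidean topology. (Alternatively, the proof of the Lemma in Section~\ref{sec_bundle} shows that the exponential coordinates of $g_{qp}$ are polynomial in $(q,p)$, so $\beta$ extends to a polynomial map on all of $\RR^n\times\RR^n$ and closedness of $R$ follows by passing to limits.) With $R$ closed and $\beta$ continuous on it, your final compactness argument is correct. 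A minor point: attributing principality to \cite{rosenlicht4} is off target; the paper obtains it by solving the linear system $g(t_1,\ldots,t_k).p=q$ explicitly.
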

\begin{proof}
$G$ acts freely because its centralizer acts transitively.
Properness was proved in a previous article by Baues and Globke \cite[Proposition 7.2]{BG}.
\end{proof}

For the following properties see Wolf's book \cite[Chapter 3]{Wolf_buch}:

\begin{lem}\label{lem_wolf1}
$G$ consists of affine transformations $g=(I+A,v)$, where $A^2=0$,
$v\perp\im A$ and $\im A$ is totally isotropic.
\end{lem}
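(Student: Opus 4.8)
The plan is to extract all three assertions from two ingredients: the defining relation of $\mathrm{O}(b)$ for the linear parts, and the commutation relations between $G$ and its centralizer $Z=\mathrm{Z}_{\Iso(\RR^n_s)}(G)$, which acts transitively on $\RR^n_s$ because $G$ is a Wolf group. Writing the bilinear form as $b$, an affine map $(I+A,v)$ is an isometry precisely when $I+A\in\mathrm{O}(b)$, which upon expansion reads
\begin{equation}
b(Ax,y)+b(x,Ay)+b(Ax,Ay)=0,\quad\text{for all }x,y\in\RR^n. \tag{$*$}
\end{equation}
First I would spell out the commutation relations: comparing $gz$ and $zg$ for $g=(I+A,v)\in G$ and $z=(I+B,w)\in Z$ shows that $gz=zg$ is equivalent to $AB=BA$ together with $Aw=Bv$. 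Since $Z$ acts transitively, for every $w\in\RR^n$ there is some $z=(I+B,w)\in Z$ with $z.0=w$, so the relation $Aw=Bv$ is available for an arbitrary translation vector $w$.

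The key step is to feed $Bv=Aw$ into the isometry relation $(*)$ written for the centralizer element $B$. Evaluating that relation at $x=y=v$ and using symmetry of $b$ gives $2\,b(Bv,v)+b(Bv,Bv)=0$, hence
\begin{equation}
2\,b(Aw,v)+b(Aw,Aw)=0,\quad\text{for all }w\in\RR^n, \tag{$\dagger$}
\end{equation}
where the quantifier over all $w$ is exactly the point at which transitivity enters. Polarizing $(\dagger)$ — replacing $w$ by $w+w'$ and subtracting the instances for $w$ and $w'$ — makes the terms $b(A\,\cdot\,,v)$, which are additive in their argument, cancel, and leaves $b(Aw,Aw')=0$ for all $w,w'$. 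Thus $\im A$ is totally isotropic, the third assertion. Substituting $b(Aw,Aw)=0$ back into $(\dagger)$ then yields $b(Aw,v)=0$ for all $w$, i.e. $v\perp\im A$, the second assertion.

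For $A^2=0$ I would return to $(*)$ and replace $y$ by $Ay$: the term $b(Ax,Ay)$ now vanishes by the isotropy just established, leaving $b((I+A)x,A^2y)=0$ for all $x,y$. As $I+A\in\mathrm{O}(b)$ is invertible, $(I+A)x$ ranges over all of $\RR^n$, and nondegeneracy of $b$ forces $A^2y=0$ for every $y$, which is the first assertion.

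I expect the main obstacle to be conceptual rather than computational, namely recognizing that the scalar identity $(\dagger)$ — produced by pairing the centralizer relation $Bv=Aw$ against the isometry condition for $B$ — is, once $w$ is allowed to range over all of $\RR^n$ by transitivity, strong enough to force isotropy of $\im A$ by polarization. Everything else then follows formally from $(*)$ and nondegeneracy of $b$. It is worth noting that this argument uses only that $Z$ consists of isometries commuting with $G$ and acts transitively; it does not presuppose that the linear parts are unipotent, a fact which instead emerges a posteriori from $A^2=0$.
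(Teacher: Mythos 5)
Your argument is correct and complete. Note that the paper gives no proof of this lemma at all --- it simply refers to Chapter~3 of Wolf's book --- so what you have produced is a self-contained derivation from the defining property of a Wolf group (transitivity of the centralizer), which is precisely the hypothesis under which Wolf established these facts; your route is in the spirit of the classical argument. Each step checks out: comparing $gz$ and $zg$ for $g=(I+A,v)$ and $z=(I+B,w)$ does give $AB=BA$ and $Aw=Bv$; applying transitivity at the origin supplies, for every $w\in\RR^n$, some centralizing $z$ with translation part $w$; and although the linear part $B$ of that element depends on $w$, it has been eliminated from the scalar identity $2\,b(Aw,v)+b(Aw,Aw)=0$, so polarizing in $w$ is legitimate and yields $b(Aw,Aw')=0$, hence isotropy of $\im A$, then $v\perp\im A$, and finally $A^2=0$ from the isometry relation together with invertibility of $I+A$ and non-degeneracy of $b$. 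A small streamlining: since the identity also holds with $w$ replaced by $tw$ for every scalar $t$, comparing the coefficients of $t$ and $t^2$ gives $b(Aw,v)=0$ and $b(Aw,Aw)=0$ simultaneously, after which one polarization finishes the isotropy claim without the back-substitution step. Your closing observation is also accurate: unipotence of the elements of $G$ (Remark~\ref{rem_unipotent}) is a consequence of $A^2=0$ and $Av=0$, not an input to it, so no circularity is introduced.
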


\begin{lem}\label{lem_wolf2}
For $g_i=(I+A_i,v_i)\in G$, $i=1,2,3$, we have
$A_1 A_2=-A_2 A_1$,
$A_1 v_2=-A_2 v_1$,
$A_1 A_2 A_3=0$
and
$[g_1,g_2] = (I+2A_1 A_2, 2A_1 v_2)$.
\end{lem}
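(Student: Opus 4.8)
The plan is to exploit two consequences of \lemref{lem_wolf1} together with the fact that $G$ is closed under multiplication. Writing $\langle\,\cdot\,,\,\cdot\,\rangle$ for the pseudo-scalar product on $\RR^n_s$ and $A^{\dagger}$ for the adjoint of an operator with respect to it, I would first record, for a single element $g=(I+A,v)\in G$, that (i) $A$ is \emph{skew-adjoint}, i.e. $A^{\dagger}=-A$, and (ii) $Av=0$. Fact (i) follows because $I+A$ is a linear isometry, so $\langle Ax,y\rangle+\langle x,Ay\rangle+\langle Ax,Ay\rangle=0$ for all $x,y$, and the last term vanishes since $\im A$ is totally isotropic; fact (ii) then follows from $v\perp\im A$, since $\langle Av,x\rangle=-\langle v,Ax\rangle=0$ for all $x$ and the form is non-degenerate. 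A useful by-product is the inverse formula $g^{-1}=(I-A,-v)$, which uses $A^2=0$ and $Av=0$.

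For the anticommutativity $A_1A_2=-A_2A_1$, the key observation is that $g_1g_2\in G$ has linear part $I+B$ with $B=A_1+A_2+A_1A_2$, so by (i) the operator $B$ is again skew-adjoint. Since $(A_1A_2)^{\dagger}=A_2^{\dagger}A_1^{\dagger}=A_2A_1$ by (i), the relation $B^{\dagger}=-B$ reads $-A_1-A_2+A_2A_1=-(A_1+A_2+A_1A_2)$, which gives $A_1A_2=-A_2A_1$ at once. For $A_1v_2=-A_2v_1$, I would apply (ii) to $g_1g_2\in G$: its translation part is $w=v_1+v_2+A_1v_2$, so $Bw=0$. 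Expanding $Bw$ and discarding every term killed by $A_i^2=0$, by $A_iv_i=0$, and by the just-proved anticommutativity leaves exactly $A_1v_2+A_2v_1=0$.

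The triple-product identity $A_1A_2A_3=0$ I would deduce by applying the anticommutativity relation to the pair $(g_1,\,g_2g_3)$, whose second linear part is $A_2+A_3+A_2A_3$. After cancelling the degree-one contributions via $A_1A_i=-A_iA_1$, this yields $A_1A_2A_3=-A_2A_3A_1$; moving $A_1$ back to the front through two successive anticommutations shows $A_2A_3A_1=A_1A_2A_3$, so $2A_1A_2A_3=0$. Finally, the commutator formula is a direct computation: using the composition rule $(L_1,v_1)(L_2,v_2)=(L_1L_2,\,L_1v_2+v_1)$ and the inverse formula, one multiplies out $g_1g_2g_1^{-1}g_2^{-1}$. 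In the linear part, setting $S=A_1+A_2$ and $P=A_1A_2$, each of $S^2$, $P^2$, $SP$, $PS$ vanishes by anticommutativity and $A_i^2=0$, leaving $I+2A_1A_2$; in the translation part, the relations (ii), $A_1v_2=-A_2v_1$, and $A_1A_2A_3=0$ collapse everything to $2A_1v_2$.

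I expect the genuine content to lie in the setup paragraph: recognising that the isometry condition together with total isotropy of $\im A$ forces skew-adjointness, and that this, combined with closure of $G$ under multiplication, converts the group axioms into the relations (i)--(ii) for products. Once that mechanism is in place, all four identities are short. The principal bookkeeping obstacle is the commutator computation, where one must systematically eliminate every mixed product of three factors and every term involving some $A_iv_i$ using the previously established identities.
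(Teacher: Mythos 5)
Your proof is correct. The paper itself gives no argument for this lemma (it only cites Wolf's book, Chapter 3), and your route is the standard one: your facts (i) and (ii) are exactly the content of Lemma~\ref{lem_wolf3}, and once those are in hand for every element of $G$, applying them to the products $g_1g_2$ and $g_1(g_2g_3)$ and expanding $g_1g_2g_1^{-1}g_2^{-1}$ yields all four identities just as you describe; I verified the bookkeeping in the translation part of the commutator and it closes up to $2A_1v_2$ as claimed.
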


\begin{lem}\label{lem_wolf3}
If $g=(I+A,v)\in G$, then $\langle Ax,y\rangle=-\langle x,Ay\rangle$,
$\im A=(\ker A)^\bot$, $\ker A=(\im A)^\bot$ and $Av=0$.
\end{lem}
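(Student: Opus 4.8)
The plan is to derive all four assertions from the isometry condition on the linear part $I+A$ together with the facts recorded in \lemref{lem_wolf1}, namely $A^2=0$, $v\perp\im A$, and that $\im A$ is totally isotropic. First I would use that $g=(I+A,v)\in G\subset\Iso(\RR^n_s)$ forces its linear part $I+A$ to be a linear isometry, so that
\[
\langle(I+A)x,(I+A)y\rangle=\langle x,y\rangle
\]
for all $x,y\in\RR^n$. Expanding the left-hand side and cancelling the common term $\langle x,y\rangle$ leaves
\[
\langle Ax,y\rangle+\langle x,Ay\rangle+\langle Ax,Ay\rangle=0.
\]
Since $\im A$ is totally isotropic, the term $\langle Ax,Ay\rangle$ vanishes for all $x,y$, and what remains is exactly the skew-adjointness $\langle Ax,y\rangle=-\langle x,Ay\rangle$. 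This is the first claim, and it is the only point where the isometry hypothesis genuinely enters.

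Next I would obtain the two orthogonality identities from skew-adjointness by the standard linear algebra of a non-degenerate form. Writing $A^*$ for the adjoint of $A$ with respect to $\langle\cdot,\cdot\rangle$, the general identity $(\im A)^\perp=\ker A^*$ holds by non-degeneracy, while skew-adjointness gives $A^*=-A$ and hence $\ker A^*=\ker A$. This yields $\ker A=(\im A)^\perp$. Taking orthogonal complements on both sides and using that $(W^\perp)^\perp=W$ for every subspace $W$ (valid because the form is non-degenerate and the space is finite-dimensional) converts this into $\im A=(\ker A)^\perp$, the remaining orthogonality claim.

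Finally, the identity $Av=0$ is immediate: the relation $v\perp\im A$ from \lemref{lem_wolf1} says precisely that $v\in(\im A)^\perp$, and by the identity just established this set equals $\ker A$, so $Av=0$. I anticipate no real obstacle here; the single substantive observation is that total isotropy of $\im A$ collapses the isometry condition into plain skew-adjointness, after which everything is a formal manipulation with orthogonal complements of a non-degenerate pairing.
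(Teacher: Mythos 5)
Your argument is correct. Note that the paper does not prove this lemma at all: it is one of several facts quoted verbatim from Wolf's book (\emph{Spaces of Constant Curvature}, Chapter 3), so there is no internal proof to compare against. Your derivation is a clean, self-contained substitute: expanding $\langle(I+A)x,(I+A)y\rangle=\langle x,y\rangle$ and killing the cross term $\langle Ax,Ay\rangle$ via total isotropy of $\im A$ (from \lemref{lem_wolf1}) gives skew-adjointness; the identities $\ker A=(\im A)^\bot$ and $\im A=(\ker A)^\bot$ then follow from $(\im A)^\perp=\ker A^*$, $A^*=-A$, and $(W^\perp)^\perp=W$ for a non-degenerate form on a finite-dimensional space; and $Av=0$ drops out of $v\perp\im A$. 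The only logical point worth flagging is that you rely on \lemref{lem_wolf1}, which is itself quoted without proof, but that is consistent with the paper's ordering and intent, so there is no circularity within the paper's framework.
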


\begin{remark}\label{rem_unipotent}
Let $g=(I+A,v)\in G$ and $X=(A,v)$.
It follows from $A^2=0$ and $Av=0$ that $X^2=(A,v)^2=0$, so
the elements of $G$ are unipotent.
Moreover,
\begin{equation}
\exp(X)=\exp(A,v)=(I+A,v)=g.
\label{eq_exp}
\end{equation}
\end{remark}

\begin{thm}\label{thm_wolf1}
$G$ is $2$-step nilpotent (meaning $[G,[G,G]]=\{\id\}$)
and uni\-potent.
\end{thm}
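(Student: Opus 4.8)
The statement has two parts, and unipotence is essentially already established. Indeed, Remark~\ref{rem_unipotent} shows that for $g=(I+A,v)\in G$ the associated $X=(A,v)$ satisfies $X^2=0$, so $g=\exp X$ is a unipotent transformation; as this holds for every element, $G$ is a unipotent group in the sense of Section~\ref{sec_alg_grp}. So the plan is to devote the real work to proving $[G,[G,G]]=\{\id\}$.

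For the nilpotence I would first reduce the claim to a single commutator identity: since the commutators $[g_1,g_2]$ generate $[G,G]$, and an element centralizing a generating set centralizes the whole subgroup, it suffices to prove $[g_3,[g_1,g_2]]=\id$ for all $g_1,g_2,g_3\in G$. Writing $c=[g_1,g_2]$, \lemref{lem_wolf2} gives $c=(I+C,u)$ with $C=2A_1A_2$ and $u=2A_1v_2$; since $G$ is a group we have $c\in G$, so \lemref{lem_wolf1} together with the vanishing of any product of three linear parts (\lemref{lem_wolf2}, valid for any ordered triple) yields $C^2=0$, $Cu=0$, and $A_3C=CA_3=0$. I would then compute $[g_3,c]=g_3\,c\,g_3^{-1}c^{-1}$ directly from the affine composition law, using $g_3^{-1}=(I-A_3,-v_3)$ and $c^{-1}=(I-C,-u)$ (both simplified via $A_3v_3=0$ and $Cu=0$).

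The linear part of $[g_3,c]$ collapses at once: it equals $(I+A_3+C)(I-A_3-C)=I-(A_3+C)^2$, and $(A_3+C)^2=A_3^2+A_3C+CA_3+C^2=0$, so $[g_3,c]$ is a pure translation. A short expansion then shows this translation to be
\begin{equation}
A_3u-Cv_3=2A_3A_1v_2-2A_1A_2v_3 .
\label{eq_plan_trans}
\end{equation}
Note that \eqref{eq_plan_trans} is obtained without invoking any cross-relation between $g_3$ and $c$, which is what makes this route clean.

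The crux, and the only place where genuine manipulation is required, is to show that \eqref{eq_plan_trans} vanishes. Here I would chain the two antisymmetry relations of \lemref{lem_wolf2}: from $A_iv_j=-A_jv_i$ we get $A_2v_3=-A_3v_2$, hence $A_1A_2v_3=-A_1A_3v_2$, and then $A_1A_3=-A_3A_1$ gives $A_1A_2v_3=A_3A_1v_2$. Thus the two summands in \eqref{eq_plan_trans} coincide and cancel, so $[g_3,c]=\id$ and therefore $[G,[G,G]]=\{\id\}$. I expect the bookkeeping in this cancellation — tracking which index plays which role as the relations are applied — to be the only delicate point; everything else is forced by $A^2=0$, $Av=0$, and the vanishing of triple products.
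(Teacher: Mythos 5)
Your argument is correct, but note that the paper does not actually prove this theorem: like Lemmas \ref{lem_wolf1}--\ref{lem_wolf3}, it is quoted from Chapter 3 of Wolf's book without proof. What you have supplied is a self-contained derivation from those quoted lemmas, and it checks out. The reduction to $[g_3,[g_1,g_2]]=\id$ is legitimate, the inverses $(I-A_3,-v_3)$ and $(I-C,-u)$ are correct given $A_3v_3=0$ and $Cu=0$ (both available since $c=[g_1,g_2]\in G$, or directly from the triple-product identity), the linear part collapses because $A_3C=2A_3A_1A_2=0=CA_3$ and $(A_3+C)^2=0$, the translation part is indeed $A_3u-Cv_3=2A_3A_1v_2-2A_1A_2v_3$, and the chain $A_1A_2v_3=-A_1A_3v_2=A_3A_1v_2$ makes the two summands cancel. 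Unipotence is, as you say, already contained in Remark~\ref{rem_unipotent}. One simplification worth recording: since $c\in G$, Lemma \ref{lem_wolf2} applies verbatim to the pair $(g_3,c)$ and gives $[g_3,c]=(I+2A_3C,\,2A_3u)$ at once, so the hand expansion of $g_3cg_3^{-1}c^{-1}$ can be skipped; one is then left with showing $A_3u=2A_3A_1v_2=0$, which follows by combining your identity $A_3A_1v_2=A_1A_2v_3$ with the cross-relation $A_3u=-Cv_3$ (Lemma \ref{lem_wolf2} applied to $g_3$ and $c$), giving $A_3A_1v_2=-A_3A_1v_2$. Your route avoids that cross-relation at the cost of a longer expansion; either version is a valid proof of the theorem the paper leaves to Wolf.
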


If $\Gamma$ is the fundamental group of $M$, it also determines the holonomy of $M$:
For $\gamma=(I+A,v)\in\Gamma$, set $\hol(\gamma)=I+A$ (the linear component of
$\gamma$). We write $A=\log(\hol(\gamma))$.
The \emph{linear holonomy group} of $\Gamma$ (or of $M$) is
\[
\hol(\Gamma)=\{\hol(\gamma) \mid \gamma\in \Gamma\}.
\]
This name is justified by the following observation:
Let $x\in M$ and $\gamma\in\pi_1(M,x)$ be a loop. Then $\hol(\gamma)$
corresponds to the parallel transport $\tau_x(\gamma):\mathrm{T}_xM\rightarrow\mathrm{T}_xM$ in a natural way, see \cite[Lemma 3.4.4]{Wolf_buch}.

\section{The Affine Structure on the Orbits}\label{sec_affine}

Let $G \subset\Iso(\RR^{n}_{s})$ be a Zariski-closed Wolf group of 
dimension $k$
and $\frg$ its Lie algebra.
We study the affine structure on the orbits
$F_p=G.p$ of the $G$-action on
$\RR^{n}$.
The \emph{orbit map} at $p$ is $\theta:G\to F_p$, $g\mapsto g.p$.


\begin{prop}\label{prop_orbit_affine}
Let $X_1,\ldots,X_k$ be a Malcev basis of $\frg$, with $X_i=(A_i,v_i)$.
For every $p\in\RR^n$, set $b_i(p)=A_i p+v_i$.
Then the orbit $F_p$ is the affine subspace
\begin{equation}
F_p = p + \mathrm{span}\{ b_1(p),\ldots,b_k(p) \}
\label{eq_orbit_affine}
\end{equation}
of dimension $\dim F_p=k$.
\end{prop}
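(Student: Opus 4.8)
The plan is to use the exponential (Malcev) coordinates on $G$ to turn the orbit map $\theta$ into an explicit \emph{affine} map of $p$, and then read off both its image and its rank. The crucial simplification is that every element of $\frg$ squares to zero, so the exponential truncates after the linear term (Remark~\ref{rem_unipotent}); this is precisely what makes the orbit a genuine affine subspace rather than merely the nilpotent image of $\RR^k$.

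First I would write an arbitrary $g\in G$ in the form $g=\exp(t_1X_1+\cdots+t_kX_k)$ with unique $t_1,\ldots,t_k\in\RR$, as guaranteed by the Malcev basis property~(\ref{eq_exp_coord}). Setting $X=\sum_{i=1}^k t_i X_i=(A,v)$, so that $A=\sum_i t_i A_i$ and $v=\sum_i t_i v_i$, I would note that $X\in\frg$ and that $\frg$ consists exactly of the logarithms $\log g=(A,v)$ of elements $g=(I+A,v)\in G$; hence $X^2=0$ by the reasoning of Remark~\ref{rem_unipotent}, which rests on $A^2=0$ (Lemma~\ref{lem_wolf1}) and $Av=0$ (Lemma~\ref{lem_wolf3}). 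Consequently $\exp(X)=(I+A,v)$ as in~(\ref{eq_exp}), and this transformation acts on $p$ by
\[
g.p=(I+A)p+v=p+Ap+v=p+\sum_{i=1}^k t_i\,(A_ip+v_i)=p+\sum_{i=1}^k t_i\,b_i(p).
\]
Letting $(t_1,\ldots,t_k)$ range over $\RR^k$ then yields exactly $F_p=\theta(G)=p+\mathrm{span}\{b_1(p),\ldots,b_k(p)\}$, which is~(\ref{eq_orbit_affine}).

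For the dimension the inclusion $\dim F_p\leq k$ is immediate from the spanning description. For the reverse I would invoke that $G$ acts freely, so the orbit map $\theta$ is injective; composing with the coordinate bijection $(t_1,\ldots,t_k)\mapsto g(t_1,\ldots,t_k)$ shows the affine map $(t_1,\ldots,t_k)\mapsto p+\sum_i t_i\,b_i(p)$ is injective, which forces $b_1(p),\ldots,b_k(p)$ to be linearly independent. Hence $\dim F_p=k$.

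There is no serious obstacle here: the entire content lies in the nilpotency that collapses $\exp$ to $I+X$. The only points that genuinely require care are checking that the particular combination $X=\sum_i t_i X_i$ itself squares to zero (which I handle by identifying $\frg$ with $\log G$ rather than by a direct bilinear expansion of $X^2$), and deriving the linear independence of the $b_i(p)$ from freeness of the action instead of attempting a direct rank computation.
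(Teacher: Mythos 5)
Your proposal is correct and follows essentially the same route as the paper: express $g$ in exponential coordinates, use $\exp(X)=I+X$ (from $X^2=0$, Remark~\ref{rem_unipotent}) to see that $g.p=p+\sum_i t_ib_i(p)$, and deduce $\dim F_p=k$ from freeness of the action. You merely make explicit two points the paper leaves as standard — that $X=\sum_i t_iX_i$ itself squares to zero, and that freeness forces the $b_i(p)$ to be linearly independent — which is fine.
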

\begin{proof}
By (\ref{eq_exp}),
$\exp(X)=\exp(A,v)=(I+A,v)$ for all $X=(A,v)\in\frg$.

Now let $t_1,\ldots,t_k$ be the exponential coordinates
(\ref{eq_exp_coord}) for $G$.
Then
\begin{align*}
g(t_1,\ldots,t_k).p
&=
p + t_1 (A_1 p + v_1) +\ldots+ t_k (A_k p + v_k) \\
&= p + t_1 b_1(p) +\ldots+ t_k b_k(p)
\end{align*}
parameterizes an affine subspace in $\RR^n$.
The assertion on the dimension is standard, taking into account that
$G $ acts freely.
\end{proof}

Since $G $ acts freely, the natural affine connection $\nabla$ on 
the affine space $F_p$ pulls back to a flat affine connection $\nabla$
on $G $ through the orbit map.

\begin{remark}\label{rem_affine_map}
Because $X^2=0$ for all $X\in\frg\subset\mat_{n+1}(\RR)$, $\exp(X)=I+X$.
So $G =I+\frg$ is an affine subspace of $\mat_{n+1}(\RR)$
which therefore has a natural affine connection $\nabla^{G }$.
This connection is left-invariant because left-multiplication is linear on
$\mat_{n+1}(\RR)$.
The orbit map $\theta:G \to F_p$, $I+X\mapsto(I+X).p$ is an affine map
(if one chooses $I\in G $ and $p\in F_p$ as origins, the linear part of $\theta$
is $X\mapsto X\cdot p$ and the translation part is $+p$).
It is also a diffeomorphism onto $F_p$ because the action is free and $\exp$
is a diffeomorphism.
\end{remark}

From the above we immediately obtain:

\begin{cor}
$(G ,\nabla^{G })$ is affinely diffeomorphic to $(G ,\nabla)$.
\end{cor}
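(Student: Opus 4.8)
The plan is to read the corollary off directly from the affine description of the orbit map established in Remark~\ref{rem_affine_map}, so that the two connections turn out not merely affinely diffeomorphic but literally equal. Write $\nabla^{F_p}$ for the canonical flat connection on the affine subspace $F_p=G.p$ (denoted simply $\nabla$ in the discussion preceding the remark). By construction the connection $\nabla$ on $G$ is the pullback $\theta^{*}\nabla^{F_p}$ along the orbit map, while $\nabla^{G}$ is the intrinsic connection carried by $G=I+\frg$ as an affine subspace of $\mat_{n+1}(\RR)$. The whole assertion therefore reduces to verifying that these two connections on $G$ coincide.

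First I would invoke Remark~\ref{rem_affine_map}, where $\theta\colon I+X\mapsto p+X\cdot p$ is already exhibited as an \emph{affine} diffeomorphism: it is a bijection onto $F_p$ (freeness of the $G$-action together with $\exp$ being a diffeomorphism), and with origins $I\in G$ and $p\in F_p$ its linear part $X\mapsto X\cdot p$ is genuinely linear on $\frg$, with translation part $p$. The only further ingredient is the elementary fact that an affine isomorphism between two affine spaces, each equipped with its canonical flat connection, intertwines these connections: straight lines are carried to straight lines and the induced parallel transports match. Applying this to $\theta$ gives $\theta_{*}\nabla^{G}=\nabla^{F_p}$, equivalently $\theta^{*}\nabla^{F_p}=\nabla^{G}$.

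Since the left-hand side is by definition the connection $\nabla$, the two connections on $G$ agree outright, $\nabla=\nabla^{G}$, and the identity map is the desired affine diffeomorphism; equivalently, $\theta$ itself furnishes an explicit affine diffeomorphism $(G,\nabla^{G})\to(F_p,\nabla^{F_p})$. As a by-product this shows that $\nabla$, although defined using a basepoint $p$, is in fact independent of that choice, since $\nabla^{G}$ is intrinsic to $G$. I do not anticipate a genuine obstacle, because Remark~\ref{rem_affine_map} has already supplied the substantive step of recognizing $\theta$ as affine; the one point deserving a word of care is the compatibility of affine maps with canonical connections. If a cited reference is not preferred, I would verify it directly in the exponential coordinates (\ref{eq_exp_coord}): writing $\theta(t_1,\dots,t_k)=p+\sum_i t_i\,b_i(p)$ exhibits these as affine coordinates for both $\nabla^{G}$ and $\nabla^{F_p}$, so that the Christoffel symbols of $\nabla$ and of $\nabla^{G}$ vanish identically in the $t_i$, and the identity transformation trivially preserves this zero connection data.
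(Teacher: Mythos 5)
Your proposal is correct and follows exactly the route the paper intends: the corollary is stated as an immediate consequence of Remark~\ref{rem_affine_map}, namely that the orbit map $\theta$ is an affine diffeomorphism onto the affine subspace $F_p$, so the pulled-back connection $\nabla$ coincides with the intrinsic connection $\nabla^{G}$ on $G=I+\frg$. Your extra observations (identity map as the affine diffeomorphism, independence of the basepoint $p$) are consistent with and only make explicit what the paper leaves implicit.
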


It follows from Lemma \ref{lem_wolf2} and (\ref{eq_exp})
that $XY=\frac{1}{2}[X,Y]$ for all $X,Y\in\frg$.
There exists a
bi-invariant flat affine connection $\tilde{\nabla}$ on $G $ given by
\begin{equation}
(\tilde{\nabla}_X Y)_g = \frac{1}{2}[X,Y]_g = g X_I Y_I,
\label{eq_canonical_conn}
\end{equation}
where $X,Y$ are left-invariant vector fields on $G $ and
$X_I, Y_I\in\frg$ their respective values at the identity $I$.
In fact, $\tilde{\nabla}$ is bi-invariant because $[X,Y]$ is
$\Ad(G)$-invariant, and it is flat because $\frg$ is 2-step nilpotent.
The connection $\tilde{\nabla}$ is called the \emph{(0)-connection} on $G$. 

\begin{prop}\label{prop_flat_affine}
The (0)-connection $\tilde{\nabla}$ on $G $ coincides with the
flat affine connection $\nabla$ on $G $.
\end{prop}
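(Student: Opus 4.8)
The plan is to reduce the claim to a direct comparison of $\tilde{\nabla}$ with the connection $\nabla^G$ of Remark \ref{rem_affine_map}, and then to compute both connections explicitly on a left-invariant frame. By Remark \ref{rem_affine_map} the orbit map $\theta$ is an affine diffeomorphism of $(G,\nabla^G)$ onto the affine subspace $F_p$ with its flat connection; since $\nabla$ on $G$ is by definition the pullback of this flat connection through $\theta$, and pullback of a flat connection by an affine map is again the intrinsic flat connection, it follows that $\nabla$ coincides with $\nabla^G$. Hence it suffices to prove $\tilde{\nabla}=\nabla^G$, that is, that the $(0)$-connection agrees with the restriction to $G=I+\frg$ of the flat affine connection on $\mat_{n+1}(\RR)$.

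Since both connections are determined by their values on the global frame of left-invariant vector fields coming from a basis of $\frg$, I would only evaluate $\tilde{\nabla}_X Y$ and $\nabla^G_X Y$ for left-invariant $X,Y$ with values $X_I,Y_I\in\frg$ at $I$. For the $(0)$-connection the defining formula (\ref{eq_canonical_conn}), together with the identity $X_I Y_I=\frac{1}{2}[X_I,Y_I]$, gives at once $(\tilde{\nabla}_X Y)_g=g\,X_I Y_I$.

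For $\nabla^G$ the point is that left translation $L_g$ is the restriction of the \emph{linear} map $M\mapsto gM$ on $\mat_{n+1}(\RR)$, so the left-invariant field is $Y\colon g\mapsto g Y_I$. First I would check that this really takes values in the direction space $\frg$ of the affine subspace $G=I+\frg$: writing $g=I+X_0$ with $X_0\in\frg$, one has $gY_I=Y_I+X_0 Y_I=Y_I+\frac{1}{2}[X_0,Y_I]\in\frg$, using $\frg\cdot\frg\subseteq\frg$. The connection $\nabla^G$ is flat directional differentiation of $\frg$-valued maps on $G$; as $g\mapsto gY_I$ is affine in $X_0$ with linear part $v\mapsto vY_I$, differentiating along $X_g=gX_I$ yields $\nabla^G_X Y|_g=(gX_I)Y_I=g\,X_I Y_I$. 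Comparing this with the expression above shows $\nabla^G_X Y=\tilde{\nabla}_X Y$ on the frame, hence $\nabla^G=\tilde{\nabla}$ and therefore $\tilde{\nabla}=\nabla$.

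The computation is entirely routine, and I do not expect a genuine obstacle; the only place demanding a little care --- and the step where the $2$-step nilpotency enters through $XY=\frac{1}{2}[X,Y]$ --- is the identification of $T_gG$ with $\frg$ and the verification that left-invariant fields are $\frg$-valued, which is exactly what makes the directional derivative collapse to the product $g\,X_I Y_I$.
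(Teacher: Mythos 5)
Your proposal is correct and follows essentially the same route as the paper: the paper's proof likewise reduces to left-invariant fields and computes $(\nabla_X Y)_g=\lim_{t\to 0}\frac{1}{t}\bigl(g(I+tX_I)Y_I-gY_I\bigr)=gX_IY_I=(\tilde{\nabla}_X Y)_g$, which is exactly your directional-derivative computation for $\nabla^G$ combined with the identification $\nabla=\nabla^G$ already recorded in the corollary following Remark \ref{rem_affine_map}. Your extra check that left-invariant fields take values in $\frg$ is a harmless (and correct) elaboration of the same argument.
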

\begin{proof}
As both connections are left-invariant, is suffices to show that they coincide
on left-invariant vector fields.
Expressed in matrix terms, left-invariance for vector fields means
$X_g=g X_I$ for all
$X\in\frg$, $g\in G $. So for all $X,Y\in\frg$,
\begin{align*}
(\nabla_X Y)_g &= \lim_{t\to 0}\frac{Y_{g\exp(tX)}-Y_g}{t}
=\lim_{t\to 0}\frac{g(I+tX_I)Y_I-gY_I}{t} \\
&= \lim_{t\to 0} g X_I Y_I = g X_I Y_I \\
&= (\tilde{\nabla}_X Y)_g,
\end{align*}
where the first and last equality hold by definition.
\end{proof}

The metric $\langle\cdot,\cdot\rangle$ on the orbit $F_p$ pulls back
to a field $(\cdot,\cdot)$ of (possibly de\-generate) left-invariant symmetric
bilinear forms on $G $ which is parallel with respect to $\nabla$.
By abuse of language we call $(\cdot,\cdot)$ the \emph{orbit metric}
on $G $.
Since all orbits are isometric, the pair $(\nabla,(\cdot,\cdot))$ does not depend on
$p$ and is an invariant of $G \subset\Iso(\RR^{n}_{s})$.

\begin{prop}\label{prop_biinvariant_fibermetric}
The orbit metric $(\cdot,\cdot)$ is bi-invariant on $G $, that is
\[
([X,Y],Z) = - (Y,[X,Z])
\]
for all left-invariant vector fields $X,Y,Z$.
\end{prop}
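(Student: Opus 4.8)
The plan is to reduce the identity to the identity element of $G$ and then to the skew-symmetry of the linear parts. Since $(\cdot,\cdot)$ is left-invariant (as established just before the statement) and $X,Y,Z$ together with the brackets $[X,Y]$ and $[X,Z]$ are all left-invariant vector fields, the two functions $g\mapsto([X,Y],Z)_g$ and $g\mapsto(Y,[X,Z])_g$ are constant on $G$. Hence it suffices to verify the identity at the identity $I$, that is, on the initial values $X_I=(A_X,v_X)$, $Y_I=(A_Y,v_Y)$, $Z_I=(A_Z,v_Z)\in\frg$.

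First I would make the orbit metric explicit at $I$. By Remark~\ref{rem_affine_map} the orbit map $\theta\colon G\to F_p$ is affine with linear part $X_I\mapsto A_X p+v_X$, so its differential at $I$ is $\d\theta_I(X_I)=b_X(p):=A_X p+v_X\in\mathrm{T}_pF_p$, where $b_X(p)$ is the velocity vector attached to $X_I$ in the sense of Proposition~\ref{prop_orbit_affine}. Consequently the pulled-back metric satisfies $(X,Y)=\langle b_X(p),b_Y(p)\rangle$ at $I$, and likewise for the remaining pairings.

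The next step is the bracket. By \lemref{lem_wolf2} the Lie bracket on $\frg$ is $[X,Y]=(2A_XA_Y,\,2A_Xv_Y)$, so the velocity vector attached to $[X,Y]$ is $b_{[X,Y]}(p)=2A_XA_Y p+2A_Xv_Y=2A_X\,b_Y(p)$. In other words, under the identification $X_I\mapsto b_X(p)$ the operator $\ad_X$ is realized simply as multiplication by $2A_X$. Feeding this into the metric gives $([X,Y],Z)=2\langle A_X b_Y(p),\,b_Z(p)\rangle$ at $I$. Finally I would invoke the skew-symmetry $\langle A_X u,w\rangle=-\langle u,A_X w\rangle$ from \lemref{lem_wolf3} to move $A_X$ across the scalar product, obtaining $2\langle A_X b_Y(p),b_Z(p)\rangle=-2\langle b_Y(p),A_X b_Z(p)\rangle=-(Y,[X,Z])$, which is the claim.

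I do not expect a serious obstacle here: once left-invariance reduces everything to $I$, the entire content is that $\ad_X$ acts as $2A_X$ on velocity vectors and that $A_X$ is skew with respect to $\langle\cdot,\cdot\rangle$. The only points requiring care are the correct identification of $\d\theta_I$ with $X_I\mapsto b_X(p)$ and keeping track of the factor $2$ in the bracket formula; both are supplied by Remark~\ref{rem_affine_map} and \lemref{lem_wolf2}, respectively.
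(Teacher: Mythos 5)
Your proof is correct, but it takes a genuinely different route from the paper. You reduce everything to the identity via left-invariance of the orbit metric (which the paper asserts just before the statement, and which follows in one line from the fact that the linear part $I+A_W$ of any $g\in G$ is a linear isometry of $\langle\cdot,\cdot\rangle$), then compute purely with the affine realization: $\d\theta_I(X_I)=b_X(p)=A_Xp+v_X$, the bracket formula $[X,Y]=(2A_XA_Y,2A_Xv_Y)$ from Lemma~\ref{lem_wolf2} giving $b_{[X,Y]}(p)=2A_Xb_Y(p)$, and the skew-symmetry $\langle A_Xu,w\rangle=-\langle u,A_Xw\rangle$ from Lemma~\ref{lem_wolf3}. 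All three ingredients are exactly as you state them, and the factor of $2$ cancels correctly on both sides, so the computation closes. The paper instead argues intrinsically at an arbitrary $g$: it introduces the right-invariant Killing fields $X^+,Y^+,Z^+$ agreeing with $X,Y,Z$ at $g$, shows $[X^+,Y]_g=0$ via the flow of $X^+$, converts $\nabla_XY$ into $-\nabla_{X^+}Y^+$ using the anticommutation rule $\nabla_{Y^+}X^+=-\nabla_{X^+}Y^+$ for Killing fields, and concludes from the skew-symmetry of $\nabla X^+$ with respect to $(\cdot,\cdot)$ (both facts imported from Baues's paper). Your version is shorter, self-contained within Wolf's structural lemmas already quoted in Section~3, and makes the role of the skew-symmetry of $A_X$ completely transparent; the paper's version avoids any appeal to the explicit matrix model and exhibits bi-invariance as an instance of general pseudo-Riemannian facts about Killing fields, at the cost of two external citations. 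Either argument is a complete proof.
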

If $(\cdot,\cdot)$
is non-degenerate this is well-known (O'Neill \cite[Proposition 11.9]{oneill}).
\begin{proof}
Let $X,Y,Z$ be a left-invariant vector fields on $G$.
Fix $g\in G$  and let
$X^+,Y^+,Z^+$ be the (right-invariant) Killing fields on $G $ such that
$X^+_g=X_g$, $Y^+_g=Y_g$, $Z^+_g=Z_g$.
The flow $\psi_t(h)$ of $X^+$ at $h\in G$ is given by
\[
\psi_t(h)=\exp(tgX_I g^{-1})h=L_{\exp(tgX_I g^{-1})}(h),
\]
in particular
$\psi_t(g)=g\exp(tX_I)$.
Because $\exp(- gX_Ig^{-1}) g\exp(X_I)
= g$ and $Y$ is left-invariant,
\begin{align*}
[X^+,Y]_g &= \frac{\d}{\d t}\Bigl|_{t=0}\d\psi_{-t} Y_{\psi_t(g)}
= \frac{\d}{\d t}\Bigl|_{t=0}\d L_{\exp(-tgX_Ig^{-1})} Y_{g\exp(tX_I)}
= \frac{\d}{\d t}\Bigl|_{t=0} Y_g
= 0.
\end{align*}
This implies
\[
(\nabla_{X^+} Y)_g = (\nabla_Y X^+)_g.
\]
$\nabla_X Y$ is tensorial in $X$, so
\[
(\nabla_X Y)_g = (\nabla_{X^+} Y)_g = (\nabla_Y X^+)_g
=(\nabla_{Y^+} X^+)_g.
\]
$X^+,Y^+$ are pullbacks of Killing fields on $\RR^n_s$
restricted to $F_p$,
so \cite[Proposition 3.10 (1)]{baues} gives $\nabla_{Y^+}X^+=-\nabla_{X^+}Y^+$.
Then
\[
(\nabla_X Y)_g = - (\nabla_{X^+} Y^+)_g.
\]
Now it follows from (\ref{eq_canonical_conn})
and the computations above that
\begin{align*}
( \frac{1}{2}[X,Y],Z )_g + ( Y,\frac{1}{2}[X,Z])_g
&= ( \nabla_X Y,Z^+ )_g + ( Y^+,\nabla_X Z)_g \\
&= ( -\nabla_Y X,Z^+ )_g + ( Y^+,-\nabla_Z X)_g \\
&= ( \nabla_{Y^+}X^+,Z^+ )_g + ( Y^+,\nabla_{Z^+}X^+)_g \\
&= 0.
\end{align*}
$(\cdot,\cdot)$ is a tensor, so we can replace $Z$ by $Z^+$
and $Y$ by $Y^+$ in the first line.
The last equality holds because $\nabla X^+$ is
skew-symmetric with respect to $(\cdot,\cdot)$ for a Killing field $X^+$ (see \cite[Subsection 3.4.1]{baues}).

As $g$ was arbitrary, $([X,Y],Z)=-(Y,[X,Z])$ holds everywhere.
So $(\cdot,\cdot)$ is bi-invariant.
\end{proof}

\begin{remark}\label{rem_orbit_metric}
The orbit metric $(\cdot,\cdot)$ on $G $ induces a symmetric 
bilinear  form $(\cdot,\cdot)$ on $\frg$ which
satisfies $([X,Y],Z)=-(Y,[X,Z])$ for all $X,Y,Z\in\frg$.
Such a form is called \emph{invariant}.
The \emph{radical} of $(\cdot,\cdot)$ in $\frg$ is the
subspace
\[
\frr = \{ X\in\frg \mid (X,\frg)=\{0\} \}.
\]
The radical $\frr$ is an ideal in $\frg$ due to the invariance of $(\cdot,\cdot)$.
\end{remark}

\begin{lem}\label{lem_commutator_isotropic}
The commutator subalgebra $[\frg,\frg]$ is a totally isotropic subspace
of $\frg$ with respect to $(\cdot,\cdot)$. The center $\mathfrak{z}(\frg)$ is
orthogonal to $[\frg,\frg]$.
\end{lem}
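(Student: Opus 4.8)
The plan is to derive both assertions directly from the invariance identity $([X,Y],Z)=-(Y,[X,Z])$ recorded in Remark \ref{rem_orbit_metric}, combined with the fact that $\frg$ is $2$-step nilpotent by Theorem \ref{thm_wolf1}. The key structural observation is that $2$-step nilpotency, i.e.\ $[\frg,[\frg,\frg]]=0$, forces every commutator to be central, so $[\frg,\frg]\subseteq\frz(\frg)$. This already suggests that the two claims are two faces of the same computation, since total isotropy of $[\frg,\frg]$ should then fall out of orthogonality of the center to $[\frg,\frg]$.

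First I would establish that the center is orthogonal to the commutator subalgebra. Let $Z\in\frz(\frg)$ and $X,Y\in\frg$. Applying invariance gives $([X,Y],Z)=-(Y,[X,Z])$, and since $Z$ is central we have $[X,Z]=0$, whence $([X,Y],Z)=0$. Because the brackets $[X,Y]$ span $[\frg,\frg]$ and $(\cdot,\cdot)$ is symmetric and bilinear, this yields $\frz(\frg)\perp[\frg,\frg]$.

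For the total isotropy of $[\frg,\frg]$, I would simply note that $[\frg,\frg]\subseteq\frz(\frg)$ by $2$-step nilpotency, so total isotropy is the restriction of the orthogonality statement to $[\frg,\frg]$ itself. Alternatively, one computes it directly: for $X,Y,Z,W\in\frg$, invariance gives $([X,Y],[Z,W])=-(Y,[X,[Z,W]])$, and $[X,[Z,W]]=0$ since $[Z,W]\in[\frg,\frg]$ and $\frg$ is $2$-step nilpotent, so the pairing vanishes and $[\frg,\frg]$ is totally isotropic.

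I do not expect any genuine obstacle here: the entire content is a single application of the invariance identity in each case, the only subtlety being to invoke $2$-step nilpotency at the right moment to annihilate the inner bracket. The cleanest presentation records first that commutators are central, making it transparent that total isotropy is a special case of the orthogonality of $\frz(\frg)$ to $[\frg,\frg]$.
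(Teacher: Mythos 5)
Your proposal is correct and follows essentially the same route as the paper: both claims are single applications of the invariance identity $([X,Y],Z)=-(Y,[X,Z])$ combined with $2$-step nilpotency, and your ``alternative'' direct computation for total isotropy is exactly the paper's argument. Your reorganization via $[\frg,\frg]\subseteq\frz(\frg)$ is a harmless cosmetic variant, not a genuinely different method.
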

\begin{proof}
$\frg$ is 2-step nilpotent.
So
\[
([X_1,X_2],[X_3,X_4])=-(X_2,[X_1,[X_3,X_4]])=-(X_2,0)=0
\]
for all $X_i\in\frg$.
If $Z\in\mathfrak{z}(\frg)$, then $(Z,[X_1,X_2])=-([X_1,Z],X_2)=0$.
\end{proof}

\begin{cor}\label{cor_Z_Zstar}
Assume there exists $Z\in[\frg,\frg]$ and $Z^*\in\frg$ such that
$(Z,Z^*)\neq 0$. Then $Z^*\not\in\frz(\frg)$.
\end{cor}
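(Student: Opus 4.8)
The plan is to argue by contraposition, invoking directly the orthogonality statement already established in Lemma \ref{lem_commutator_isotropic}, which asserts that the center $\frz(\frg)$ is orthogonal to the commutator subalgebra $[\frg,\frg]$ with respect to the invariant form $(\cdot,\cdot)$.

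First I would suppose, for contradiction, that $Z^*\in\frz(\frg)$. Since $Z\in[\frg,\frg]$ by hypothesis, the orthogonality $\frz(\frg)\perp[\frg,\frg]$ furnished by Lemma \ref{lem_commutator_isotropic} immediately gives $(Z,Z^*)=0$, contradicting the assumption $(Z,Z^*)\neq 0$. Hence $Z^*$ cannot lie in $\frz(\frg)$.

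There is no genuine obstacle here: the corollary is essentially the contrapositive of the orthogonality of the center and the commutator subalgebra, which itself follows from the invariance of $(\cdot,\cdot)$ together with the computation $(Z,[X_1,X_2])=-([X_1,Z],X_2)=0$ valid for central $Z$. The only point worth noting is that possible degeneracy of $(\cdot,\cdot)$ is irrelevant, since the argument uses only bilinearity and invariance as encoded in Lemma \ref{lem_commutator_isotropic} and never appeals to non-degeneracy.
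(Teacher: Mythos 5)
Your proof is correct and is exactly the argument the paper intends: the corollary is stated without proof as an immediate consequence of Lemma~\ref{lem_commutator_isotropic}, whose second assertion ($\frz(\frg)\perp[\frg,\frg]$, proved via $(Z,[X_1,X_2])=-([X_1,Z],X_2)=0$ for central $Z$) gives the contrapositive directly. Your remark that non-degeneracy plays no role is also accurate.
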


\begin{lem}\label{lem_Z_perp_span}
If $Z=[X,Y]$, then $Z\perp\mathrm{span}\{X,Y,Z\}$.
\end{lem}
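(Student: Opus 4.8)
The plan is to reduce the three orthogonality relations $(Z,X)=0$, $(Z,Y)=0$ and $(Z,Z)=0$ to direct applications of the invariance identity $([U,V],W)=-(V,[U,W])$ recorded in Remark~\ref{rem_orbit_metric}, combined with the $2$-step nilpotency of $\frg$ and the symmetry of $(\cdot,\cdot)$. Since $Z=[X,Y]$ lies in $[\frg,\frg]$, the general structure established in Lemma~\ref{lem_commutator_isotropic} is already at our disposal, and the remaining content is a handful of one-line computations.

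First I would handle $(Z,X)$ by moving the bracket off the first slot via invariance:
\[
(Z,X)=([X,Y],X)=-(Y,[X,X])=0,
\]
since $[X,X]=0$. The pairing $(Z,Y)$ is the one mildly subtle point, because a single application of invariance does not make the resulting bracket vanish; instead it produces
\[
(Z,Y)=([X,Y],Y)=-(Y,[X,Y])=-(Y,Z).
\]
Here I would invoke the symmetry $(Y,Z)=(Z,Y)$ to obtain $2(Z,Y)=0$, hence $(Z,Y)=0$. This interplay between invariance and symmetry is the only step that is not purely mechanical.

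Finally, for $(Z,Z)$ I would either cite Lemma~\ref{lem_commutator_isotropic} directly, since $Z\in[\frg,\frg]$ and that subspace is totally isotropic, or argue by hand:
\[
(Z,Z)=([X,Y],[X,Y])=-(Y,[X,[X,Y]])=0,
\]
where $[X,[X,Y]]=0$ because $\frg$ is $2$-step nilpotent. Combining the three computations gives $Z\perp\mathrm{span}\{X,Y,Z\}$.

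I do not anticipate a genuine obstacle here: once invariance is available, each case is a single line. The only place demanding a moment's care is the $(Z,Y)$ computation, where one must combine invariance with the symmetry of the form rather than expecting the bracket term to vanish on its own.
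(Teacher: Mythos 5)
Your proof is correct and follows exactly the route the paper intends: its proof of this lemma is the one-line instruction ``use invariance and 2-step nilpotency,'' and your three computations are precisely the details being elided (with the small observation that $(Z,Y)=0$ can also be obtained in one step by first writing $([X,Y],Y)=-([Y,X],Y)=(X,[Y,Y])=0$, avoiding the symmetry argument). No gaps.
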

\begin{proof}
Use invariance and 2-step nilpotency.
\end{proof}

\begin{example}
Let $\Gamma\cong\mathrm{H}_3(\ZZ)$ with Zariski closure $G \cong\mathrm{H}_3$ (the discrete and real Heisenberg groups, respectively).
Assume $M=\RR^{n}_{s}/\Gamma$ is a flat pseudo-Riemannian homogeneous manifold.
The orbit metric induced on $G $ is degenerate, and $\mathfrak{z}(\frg)\subset\frr$:
Let $X,Y$ denote the Lie algebra generators of $\mathfrak{h}_3$ and $Z=[X,Y]$.
By Lemma \ref{lem_Z_perp_span}, $Z\in\frr$. So the non-degenerate case
is excluded.

The possible signatures are $(0,0,3)$, $(1,0,2)$, $(1,1,1)$ and $(2,0,1)$.
\end{example}

\begin{remark}
One easily realizes the signatures in the above example
by modi\-fying the metric in \cite[Example 6.4]{BG} accordingly.
\end{remark}

\section{Non-Degenerate Orbits}\label{sec_nondeg}

As before, let $M=\RR^n_s/\Gamma$ and $G$ the Zariski closure of $\Gamma$
with Lie algebra $\frg$.
If $\frg$ is not abelian and the orbit metric $(\cdot,\cdot)$
is non-degenerate then there are
some strong constraints on the structure of $\frg$.

\begin{prop}\label{prop_nondeg_abhol}
If the orbit metric on $G $ is non-degenerate,
then the quotient $F_p/\Gamma$ of each orbit is itself a compact
flat pseudo-Riemannian homogeneous space.
In particular, the linear holonomy group of $M$ is abelian.
\end{prop}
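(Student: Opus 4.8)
The plan is to realise each orbit quotient $F_p/\Gamma$ as a flat pseudo-Riemannian homogeneous space in its own right, and then to invoke the known fact (from \cite{BG}) that a \emph{compact} such space has abelian linear holonomy. So I first record what non-degeneracy buys on a single orbit. By Proposition~\ref{prop_orbit_affine} the orbit $F_p=p+V$ is an affine subspace with direction $V=\mathrm{span}\{b_1(p),\dots,b_k(p)\}$, and the orbit metric is non-degenerate precisely when $\langle\cdot,\cdot\rangle|_V$ is non-degenerate. An affine subspace is totally geodesic in $\RR^n$, so with this induced non-degenerate metric $F_p$ is a complete flat pseudo-Riemannian manifold, isometric to some $\RR^k_{s'}$. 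Since the orbits are $G$-invariant, every $\gamma\in\Gamma\subseteq G$ preserves $F_p$ and acts there by an affine isometry; as $\Gamma$ is discrete and acts freely and properly on $\RR^n_s$, it acts freely and properly discontinuously on $F_p$, so $F_p/\Gamma$ is again a complete flat pseudo-Riemannian manifold.

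Next I would establish compactness and homogeneity. Because $\Gamma$ is discrete with Zariski closure $G$, it is a lattice in $G$ (see \secref{sec_alg_grp}), so $\Gamma\backslash G$ is compact. The orbit map $\theta\colon G\to F_p$ is a diffeomorphism intertwining the left $\Gamma$-action on $G$ with the $\Gamma$-action on $F_p$ (since $\theta(\gamma g)=\gamma.\theta(g)$), whence $F_p/\Gamma\cong\Gamma\backslash G$ is compact. For homogeneity I would use that the orbit metric is bi-invariant (Proposition~\ref{prop_biinvariant_fibermetric}): the right translations $R_h$, $h\in G$, are then isometries of $(G,(\cdot,\cdot))\cong F_p$ which commute with the left $\Gamma$-action, so they descend to a transitive group of isometries of $F_p/\Gamma$. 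Hence $F_p/\Gamma$ is a compact flat pseudo-Riemannian homogeneous space, which is the substance of the proposition.

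For the holonomy statement I would apply the result of \cite{BG} that a compact flat pseudo-Riemannian homogeneous space has abelian linear holonomy to $F_p/\Gamma$. Its fundamental group is $\Gamma$ and its linear holonomy is $\{(I+A)|_V:(I+A,v)\in\Gamma\}$, so this yields $(A_iA_j)|_V=0$ for all $\gamma_i\in\Gamma$. To conclude that the full linear holonomy $\hol(\Gamma)=\{I+A\}$ of $M$ is abelian I must pass from $V$ to all of $\RR^n$. Non-degeneracy gives the orthogonal splitting $\RR^n=V\oplus V^\perp$, and since each $A$ is skew and preserves $V$ it also preserves $V^\perp$, so $A$ is block-diagonal and it suffices to show $A|_{V^\perp}=0$, i.e.\ $\im A\subseteq V$; then $(A_iA_j)|_V=0$ together with $A|_{V^\perp}=0$ forces $A_iA_j=0$, and $(I+A_i)(I+A_j)=I+A_i+A_j$ is symmetric in $i,j$, so $\hol(\Gamma)$ is abelian.

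I expect $\im A\subseteq V$ to be the main obstacle. It says exactly that the tangent distribution of the orbit foliation is parallel (all orbits share the direction $V$), equivalently that the normal linear holonomy along the leaves is trivial; a purely linear-algebraic argument on a single orbit only yields that $A$ preserves $V$ and $V^\perp$ and cannot rule out a non-trivial unipotent action on the (possibly indefinite) space $V^\perp$. I would therefore bring in non-degeneracy together with the transitive centralizer $\mathrm{Z}_{\Iso(\RR^n_s)}(G)$: it commutes with $G$, hence permutes the orbits, and in the non-degenerate case these are totally geodesic submanifolds of constant signature that it maps isometrically onto one another, which I expect to force the orbit directions to be constant and thus $\im A\subseteq V$.
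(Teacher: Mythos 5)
Your treatment of the first assertion is correct, and for homogeneity you take a genuinely different route from the paper. The paper obtains compactness the same way you do ($F_p$ is isometric to $G$, and $\Gamma$ is a lattice in its Zariski closure, so $F_p/\Gamma\cong G/\Gamma$ is compact), but it derives homogeneity by pointing forward to Proposition~\ref{prop_RFp}: the subgroup $U_{F_p}$ of the transitive centralizer preserves $F_p$, acts transitively on it, and centralizes $\Gamma$, hence descends to a transitive isometry group of $F_p/\Gamma$ as required by Theorem~\ref{thm_transitive}. You instead use the bi-invariance of the orbit metric (Proposition~\ref{prop_biinvariant_fibermetric}) to make the right translations of $G$ a transitive group of isometries commuting with the left $\Gamma$-action. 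That is valid --- $\ad$-invariance of a left-invariant metric on a connected group gives bi-invariance --- and it has the advantage of staying entirely within Section~\ref{sec_affine}, whereas the paper borrows a result from Section~\ref{sec_bundle}.

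The holonomy assertion is where your proposal has a genuine gap, which you honestly flag yourself. The paper does not argue through the compact quotient at all: it invokes \cite[Corollary 3.3]{BG} directly, a criterion formulated for the Wolf group $G$ itself which, under the present non-degeneracy hypothesis on the orbit, yields that $\hol(G)$ --- acting on all of $\RR^n$ --- is abelian. Your route, applying the compact-case theorem of \cite{BG} to $F_p/\Gamma$, only controls the restricted holonomy $\{(I+A)|_V\}$, and you then need $\im A\subseteq V$ to upgrade $(A_iA_j)|_V=0$ to $A_iA_j=0$. That bridge is not established. What comes cheaply is $AV\subseteq V$ and, by skewness, $AV^\perp\subseteq V^\perp$; combined with $\im A=(\ker A)^\perp$ from Lemma~\ref{lem_wolf3}, the inclusion $\im A\subseteq V$ is \emph{equivalent} to $A|_{V^\perp}=0$, so it is essentially the statement to be proved rather than an available input. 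Your proposed fix --- that the transitive centralizer permutes the orbits isometrically and should therefore force the direction spaces $V_p$ to be constant --- is speculative: the centralizer maps $F_p$ onto $F_q$, but nothing in that observation identifies $V_q$ with $V_p$, and the direction spaces do vary with $p$ in general (certainly in the degenerate case). The missing content is precisely the content of the cited \cite[Corollary 3.3]{BG}; either quote that result as the paper does, or supply a genuine argument that $A$ annihilates $V^\perp$.
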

\begin{proof}
The orbits $F_p$ are affine subspaces of $\RR^n$ and isometric to $G$.
So $F_p/\Gamma=G /\Gamma$ is a compact
flat pseudo-Riemannian space.
Moreover, in Proposition \ref{prop_RFp} we will see that a group
centralizing $\Gamma$ acts transitively on $F_p$, so $F_p/\Gamma$
is homogeneous.
It was shown in \cite[Corollary 3.3]{BG} that the linear holonomy of $G$ is abelian.
\end{proof}

Additionally, $\frg$ must contain a subalgebra of a certain type.

\begin{defn}\label{def_butterfly}
A \emph{butterfly algebra}
$\butt$ is a 2-step nilpotent
Lie algebra of dimension $6$ endowed with an invariant
bilinear form $(\cdot,\cdot)$ such that there exists
$Z\in[\butt,\butt]$ with $Z\not\in\frr$.
\end{defn}

The naming in Definition \ref{def_butterfly} will become clear
after the proof of the following proposition:

\begin{prop}\label{prop_butterfly}
A butterfly algebra $\butt$ admits a vector space decompo\-sition
\[
\butt = \frv \oplus [\butt,\butt],
\]
where the subspaces $\frv$ and $[\butt,\butt]$ are totally isotropic and
dual to each other. In particular, $(\cdot,\cdot)$ is non-degenerate of
signature $(3,3)$.
\end{prop}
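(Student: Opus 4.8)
The plan is to produce an explicit hyperbolic basis adapted to $[\butt,\butt]$. First I would reduce to a convenient generator of the commutator. Since $[\butt,\butt]$ is spanned by single brackets and, by the defining hypothesis of a butterfly algebra, is not contained in the radical $\frr$, some single bracket $Z=[X,Y]$ must fail to lie in $\frr$. Hence there is $Z^*\in\butt$ with $(Z,Z^*)=1$ after scaling, and by Corollary~\ref{cor_Z_Zstar} we have $Z^*\notin\frz(\butt)$.

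The heart of the argument is a pairing computation. Set $P=[X,Z^*]$ and $Q=[Y,Z^*]$. Using invariance of $(\cdot,\cdot)$ together with $(Z,Z^*)=1$, I would compute $(Y,P)=([X,Z^*],Y)=-(Z^*,[X,Y])=-1$ and $(X,Q)=-([Y,X],Z^*)=(Z,Z^*)=1$; in particular $P,Q\neq 0$. All remaining relevant pairings vanish: $(X,P)$, $(Y,Q)$, $(Z^*,P)$, $(Z^*,Q)$ vanish by \lemref{lem_Z_perp_span} applied to $P$ and to $Q$; $(X,Z)$, $(Y,Z)$ vanish by \lemref{lem_Z_perp_span} applied to $Z$; and $(Z^*,Z)=1$. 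Consequently the pairing of $W_1=\mathrm{span}\{X,Y,Z^*\}$ against $W_2=\mathrm{span}\{Z,P,Q\}\subseteq[\butt,\butt]$ is given by the anti-diagonal matrix with entries $1,-1,1$, which is invertible.

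From the invertibility of this $3\times 3$ matrix I would deduce several facts at once: the sets $\{X,Y,Z^*\}$ and $\{Z,P,Q\}$ are each linearly independent; the two subspaces meet trivially, since a vector in $W_1\cap W_2$ is orthogonal to all of $W_2$ (as $W_2$ is isotropic) and hence vanishes by non-degeneracy of the cross pairing; and the restriction of $(\cdot,\cdot)$ to $W_1+W_2$ is non-degenerate. A dimension count then forces $\butt=W_1\oplus W_2$ and shows that $(\cdot,\cdot)$ is non-degenerate on all of $\butt$. Since $[\butt,\butt]$ is totally isotropic by \lemref{lem_commutator_isotropic} and a totally isotropic subspace of a non-degenerate $6$-dimensional form has dimension at most $3$, I conclude $[\butt,\butt]=W_2$ is exactly $3$-dimensional.

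It remains to replace $W_1$ by a totally isotropic complement. This is the standard Lagrangian-complement construction: letting $f_1,f_2,f_3\in[\butt,\butt]$ be the basis dual to $e_1,e_2,e_3=X,Y,Z^*$ under the now non-degenerate cross pairing, I set $v_i=e_i-\frac{1}{2}\sum_j (e_i,e_j)f_j$. Because $[\butt,\butt]$ is isotropic, the correction terms leave the cross pairing unchanged, $(v_i,f_j)=\delta_{ij}$, while the symmetric choice of coefficients arranges $(v_i,v_j)=0$. Then $\frv=\mathrm{span}\{v_1,v_2,v_3\}$ is totally isotropic, $\butt=\frv\oplus[\butt,\butt]$, and $\frv$ is dual to $[\butt,\butt]$; splitting off the three hyperbolic planes $\mathrm{span}\{v_i,f_i\}$ yields signature $(3,3)$. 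I expect the main obstacle to be the bookkeeping in the pairing computation — in particular verifying that all the off-diagonal pairings vanish so that the cross-pairing matrix is invertible — together with the small point that $Z$ may be taken to be a single bracket. Once non-degeneracy is established, the final step is routine linear algebra.
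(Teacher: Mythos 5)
Your proof is correct and follows essentially the same route as the paper: both construct the dual triples $\{X,Y,Z^*\}$ and $\{Z,[X,Z^*],[Y,Z^*]\}$ from a bracket $Z=[X,Y]$ outside the radical, verify the cross-pairing using invariance and \lemref{lem_Z_perp_span}, and finish by passing to a totally isotropic complement of $[\butt,\butt]$. You are in fact slightly more careful than the paper on two minor points (reducing to a \emph{single} bracket $Z\notin\frr$, and writing out the Lagrangian correction explicitly), and you justify $[\butt,\butt]=\mathrm{span}\{Z,P,Q\}$ via the dimension bound on totally isotropic subspaces where the paper instead uses $[\butt,\butt]\subseteq\frz(\butt)$ and $\frv\cap\frz(\butt)=\{0\}$ --- cosmetic differences only.
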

\begin{proof}
Let $X,Y\in\butt$ such that $Z=[X,Y]\neq 0$.
By Lemma \ref{lem_commutator_isotropic} $[\butt,\butt]$ is totally isotropic.
By assumption there exists $Z^*\in\butt\setminus[\butt,\butt]$ such that
\[
(Z,Z^*)=1.
\]
As a consequence of Lemma \ref{lem_Z_perp_span}, $X,Y,Z^*$ are linearly
independent, so they span a 3-dimensional subspace $\frv$.
Since $(\cdot,\cdot)$ is invariant,
\begin{align*}
1&=([X,Y],Z^*)=(Y,[Z^*,X]),\\
1&=([X,Y],Z^*)=(X,[Z^*,Y]).
\end{align*}
Set $X^*=[Z^*,Y]$ and $Y^*=[Z^*,X]$. Lemma \ref{lem_Z_perp_span}
further implies that $X^*,Y^*,Z$ are linearly independent, hence span a
3-dimensional subspace $\frw$ of $[\butt,\butt]$.

Since $\butt$ is 2-step nilpotent $[\butt,\butt]\subset\mathfrak{z}(\butt)$.
But $\frv\cap\mathfrak{z}(\butt)=\{0\}$, so it follows from dimension reasons
that $\frw=[\butt,\butt]=\mathfrak{z}(\butt)$. By construction also
\[
X\perp\mathrm{span}\{Y^*,Z\},\quad Y\perp\mathrm{span}\{X^*,Z\}.
\]
After a base change we may assume that $\frv$ is a dual space to $[\butt,\butt]$.
\end{proof}

The bases $\{X,Y,Z^*\}$ and $\{X^*,Y^*,Z\}$ from the proof above are dual bases
to each other. The following diagram describes the relations between these
bases, where solid lines from two elements indicate a commutator and dashed
 lines indicate duality between the corresponding elements:
\begin{center}
	\includegraphics{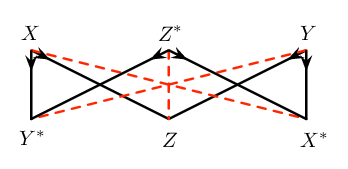}
\end{center}
This explains the name.
In particular, the following corollary justifies to speak of ``the''
butterfly algebra:\footnote{Some people misguidedly believe that the name
\emph{bat algebra} would be more apt.}

\begin{cor}
Any two butterfly algebras are isometric and isomorphic as Lie algebras.
\end{cor}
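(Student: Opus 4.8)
The plan is to extract from the proof of Proposition \ref{prop_butterfly} a single normal form---one fixed set of structure constants together with one fixed Gram matrix---that every butterfly algebra realizes, and then observe that the linear map matching the normal-form bases of two such algebras is simultaneously a Lie algebra isomorphism and a linear isometry. In other words, I would prove the stronger statement that any two butterfly algebras are isomorphic as \emph{metric} Lie algebras, of which the asserted isomorphism and isometry are the two halves.

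Concretely, I would fix the adapted bases produced in Proposition \ref{prop_butterfly}: a basis $\{X,Y,Z^*\}$ of $\frv$ and the dual basis $\{X^*,Y^*,Z\}$ of $[\butt,\butt]=\frz(\butt)$, with $Z=[X,Y]$, $X^*=[Z^*,Y]$ and $Y^*=[Z^*,X]$. Since $\butt$ is $2$-step nilpotent, the three central vectors $X^*,Y^*,Z$ bracket trivially with everything, so the only nonzero brackets among the six basis vectors are precisely these three, which hold by definition. Hence the structure constants in this basis are the fixed numbers of the free $2$-step nilpotent Lie algebra on three generators (equivalently $\frh_3\ltimes_{\Ad^*}\frh_3^*$), independent of the particular butterfly algebra. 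At the same time, $\frv$ and $[\butt,\butt]$ are totally isotropic (Lemma \ref{lem_commutator_isotropic} and Proposition \ref{prop_butterfly}) and, after the final base change in Proposition \ref{prop_butterfly}, dual to one another, so the Gram matrix is the fixed block matrix $\bigl(\begin{smallmatrix}0&I_3\\I_3&0\end{smallmatrix}\bigr)$ of signature $(3,3)$, again independent of the algebra.

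Given two butterfly algebras $\butt$ and $\butt'$, I would then build such bases in each and let the linear map $\varphi$ send the $i$-th basis vector of $\butt$ to the $i$-th basis vector of $\butt'$. Because the structure constants agree, $\varphi$ is a Lie algebra isomorphism; because the Gram matrices agree, $\varphi$ is a linear isometry. This yields the claim, and in fact produces an isomorphism of metric Lie algebras.

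The step requiring the most care is verifying that the adapted basis genuinely admits this normal form with no residual freedom, so that the construction is insensitive to the choices of $X,Y,Z^*$. The invariance of $(\cdot,\cdot)$ forces the cross-pairing of $\frv$ with $[\butt,\butt]$ to be a scalar multiple $\mu$ of the dual pairing (with all off-diagonal cross terms and all pairings within $[\butt,\butt]$ vanishing), and the defining condition $Z\notin\frr$ forces $\mu\neq0$. One then makes $\frv$ totally isotropic by replacing each of $X,Y,Z^*$ with itself plus a suitable central correction: as Lemma \ref{lem_Z_perp_span} and the computation of the cross-pairing show, such corrections can be solved for since $\mu\neq0$, and they leave every bracket unchanged because the correcting terms are central. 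A uniform rescaling of the generators---using that real cube roots always exist---then normalizes $\mu$ to $1$ and the central basis accordingly. After these adjustments the structure constants and the Gram matrix are exactly the fixed data above, completing the reduction of every butterfly algebra to the common normal form.
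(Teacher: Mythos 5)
Your proposal is correct and follows essentially the same route as the paper: both construct the adapted basis $\{X,Y,Z^*,X^*,Y^*,Z\}$ from the proof of Proposition \ref{prop_butterfly} in each algebra and map one basis to the other, the point being that the structure constants and the Gram matrix in that basis are the same fixed data for every butterfly algebra. You merely make explicit the normal-form bookkeeping (and the central corrections behind the paper's phrase ``after a base change'') that the paper's two-line proof leaves implicit.
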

\begin{proof}
In every butterfly algebra one can find a basis as in the proof of
Proposition \ref{prop_butterfly}. Mapping such a basis of one butterfly algebra
to the corres\-ponding elements of another yields an isometric
Lie algebra isomorphism.
\end{proof}

From the above we conclude:

\begin{cor}
Let $\mathfrak{h}_3$ denote the Heisenberg algebra and $\ad^*$ its
coadjoint re\-presentation.
Up to isometric iso\-morphism, the butterfly algebra is
\begin{equation}
\butt = \mathfrak{h}_3\oplus_{\ad^*}\mathfrak{h}_3^*
\label{eq_b6}
\end{equation}
with $(\cdot,\cdot)$ given by
\begin{equation}
( X+X^*, Y+Y^* )
= X^*(Y)+Y^*(X)
\label{eq_b6ip}
\end{equation}
for $X,Y\in\mathfrak{h}_3$, $X^*,Y^*\in\mathfrak{h}_3$.
\end{cor}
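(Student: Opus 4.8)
The plan is to verify that the explicit algebra $\mathfrak{d}=\mathfrak{h}_3\oplus_{\ad^*}\mathfrak{h}_3^*$, equipped with the form (\ref{eq_b6ip}), is itself a butterfly algebra in the sense of Definition \ref{def_butterfly}, and then to invoke the preceding corollary, which asserts that any two butterfly algebras are isometrically isomorphic. This immediately gives $\mathfrak{d}\cong\butt$ and hence the claimed normal form.

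First I would record the structure of $\mathfrak{d}$. Here $\mathfrak{h}_3^*$ is an abelian ideal and the bracket is $[X+\xi,Y+\eta]=[X,Y]+\ad^*_X\eta-\ad^*_Y\xi$, with $(\ad^*_X\xi)(Y)=-\xi([X,Y])$; thus $\dim\mathfrak{d}=6$. To see that $\mathfrak{d}$ is 2-step nilpotent I would use that $\mathfrak{h}_3$ is 2-step nilpotent: this forces $\ad^*_{[X,Y]}=0$ and $\ad^*_X\ad^*_Y=0$ on $\mathfrak{h}_3^*$, since evaluating either operator on a covector reduces to a covector applied to a double bracket $[\cdot,[\cdot,\cdot]]=0$. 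From these two vanishings one reads off $[\mathfrak{d},[\mathfrak{d},\mathfrak{d}]]=\{0\}$ directly, so $\mathfrak{d}$ is a 2-step nilpotent Lie algebra of dimension $6$.

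Next I would check that (\ref{eq_b6ip}) is symmetric, non-degenerate and invariant. Symmetry is built into the formula, and non-degeneracy holds because the form restricts to the perfect dual pairing between $\mathfrak{h}_3$ and $\mathfrak{h}_3^*$ while vanishing on each factor. Invariance, $([A,B],C)=-(B,[A,C])$, is the standard property of the bi-invariant metric on the cotangent extension $\mathfrak{g}\ltimes_{\ad^*}\mathfrak{g}^*$: expanding both sides via the definition of $\ad^*$ reduces the identity to the same manifestly symmetric collection of terms of the form $\xi([X,Y])$, so the two sides agree. I expect this invariance verification to be the main, though entirely routine, computational point.

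Finally, since the form is non-degenerate its radical $\frr$ is trivial, whereas $[\mathfrak{d},\mathfrak{d}]\neq\{0\}$ because it contains $[X,Y]$ for Heisenberg generators $X,Y$. Hence there is $Z\in[\mathfrak{d},\mathfrak{d}]$ with $Z\notin\frr$, and all the hypotheses of Definition \ref{def_butterfly} are met, so $\mathfrak{d}$ is a butterfly algebra. Applying the preceding corollary yields an isometric Lie algebra isomorphism $\mathfrak{d}\cong\butt$, which is precisely the assertion. If one prefers to avoid the uniqueness statement, the same conclusion follows by matching the basis $X,Y,Z^*,X^*,Y^*,Z$ produced in the proof of Proposition \ref{prop_butterfly} with the basis of $\mathfrak{d}$ given by two Heisenberg generators, the dual covector of the central generator, and the three spanning commutators; the relations $Z=[X,Y]$, $X^*=[Z^*,Y]$, $Y^*=[Z^*,X]$ then reproduce exactly the nonzero brackets of $\mathfrak{d}$, after rescaling the dual basis to absorb the sign conventions of $\ad^*$ and to normalize the pairing to (\ref{eq_b6ip}).
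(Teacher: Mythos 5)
Your proposal is correct and fleshes out exactly the argument the paper intends: the corollary is stated as an immediate consequence of Proposition \ref{prop_butterfly} and the preceding uniqueness corollary, and your verification that $\mathfrak{h}_3\oplus_{\ad^*}\mathfrak{h}_3^*$ with the pairing (\ref{eq_b6ip}) is itself a butterfly algebra, followed by an appeal to uniqueness, is the natural way to make that deduction explicit. Your closing remark about matching the basis $X,Y,Z^*,X^*,Y^*,Z$ (up to signs/rescaling) is likewise consistent with the structure constants derived in the proof of Proposition \ref{prop_butterfly}.
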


So $\butt$ is the Lie algebra of the Lie group
$\mathrm{B}_6=\mathrm{H}_3\ltimes_{\Ad^*}\frh_3^*$,
where $\mathrm{H}_3$ denotes the Heisenberg group.
For a lattice $\Lambda$ in $\mathrm{B}_6$ denote the space
$\mathrm{B}_6/\Lambda$ by $M_6(\Lambda)$.
Baues \cite[Example 4.3]{baues} gave the spaces $M_6(\Lambda)$
as a class of compact homogeneous pseudo-Riemannian manifolds
of dimension $6$ with non-abelian fundamental group
and noted that these are the only possible examples.
This is the minimal dimension for such examples with non-abelian
fundamental group
according to \cite[Corollary 4.9]{baues}. We can rederive this
result as a consequence of the following:

\begin{prop}\label{prop_butterfly_sub}
If $G $ is not abelian and the orbit metric on $G $ is non-degenerate,
then $G$ contains a butterfly subgroup $B\cong\mathrm{B}_6$.
In particular, $\dim G \geq 6$.
Moreover, $B$ can be chosen such that it contains
a lattice $\Lambda=B\cap\Gamma$.
\end{prop}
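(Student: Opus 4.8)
The plan is to realize a butterfly subalgebra $\butt\subset\frg$ by the construction in the proof of Proposition~\ref{prop_butterfly}, but with the generating vectors drawn from $\log\Gamma$, and then to exponentiate. Since $G$ is not abelian, $\Gamma$ is non-abelian (the Zariski closure of an abelian group is abelian), so there are $\gamma_1,\gamma_2\in\Gamma$ with $[\gamma_1,\gamma_2]\neq\id$; I set $X=\log\gamma_1$, $Y=\log\gamma_2$ and $Z=[X,Y]\neq 0$. Because the orbit metric is non-degenerate its radical $\frr$ vanishes, so $(Z,\cdot)$ is a non-zero linear form on $\frg$. As $\Gamma$ is a lattice in the unipotent group $G$, the set $\log\Gamma$ spans $\frg$ over $\RR$, so some $\gamma_3\in\Gamma$ satisfies $(Z,\log\gamma_3)\neq 0$; I put $Z^*=\log\gamma_3$ and, after rescaling, assume $(Z^*,Z)=1$. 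By Corollary~\ref{cor_Z_Zstar} this forces $Z^*\notin\frz(\frg)\supseteq[\frg,\frg]$. Finally I set $X^*=[Z^*,Y]$ and $Y^*=[Z^*,X]$.

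Next I would check that $\butt:=\mathrm{span}\{X,Y,Z^*,X^*,Y^*,Z\}$ is a butterfly subalgebra. Since $\frg$ is $2$-step nilpotent, $[\frg,\frg]\subseteq\frz(\frg)$, so the only non-trivial brackets among the six vectors are $[X,Y]=Z$, $[Z^*,Y]=X^*$, $[Z^*,X]=Y^*$, all lying in $\butt$; hence $\butt$ is a subalgebra. To see it is $6$-dimensional with non-degenerate form I would compute the Gram matrix in the ordered basis $(X,Y,Z^*,X^*,Y^*,Z)$. By Lemma~\ref{lem_commutator_isotropic} the set $\{X^*,Y^*,Z\}\subseteq[\frg,\frg]$ is totally isotropic, while Lemma~\ref{lem_Z_perp_span} annihilates all mixed pairings except the three diagonal ones $(X,X^*)$, $(Y,Y^*)$, $(Z^*,Z)$, each equal to $\pm(Z,Z^*)=\pm 1$ by invariance. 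The Gram matrix thus has block form $\left(\begin{smallmatrix}P & Q\\ Q^{\top} & 0\end{smallmatrix}\right)$ with $Q$ diagonal and invertible, hence is non-singular; this simultaneously yields linear independence of the six vectors and non-degeneracy of $(\cdot,\cdot)|_{\butt}$. So $\butt$ is a butterfly algebra in the sense of Definition~\ref{def_butterfly}, and since $G$ is simply connected unipotent, $B=\exp\butt$ is a closed connected subgroup with $B\cong\Butt$; in particular $\dim G\geq\dim B=6$. I deliberately avoid the base change at the end of the proof of Proposition~\ref{prop_butterfly}, so that the generators stay inside $\log\Gamma$.

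The substantive point, and where I expect the real work, is upgrading $B$ to a subgroup meeting $\Gamma$ in a lattice. By Lemma~\ref{lem_wolf2} together with (\ref{eq_exp}) one has $[\exp U,\exp V]=\exp[U,V]$ for $U,V\in\frg$, so from $\exp X,\exp Y,\exp Z^*\in\Gamma$ it follows that $\exp Z=[\exp X,\exp Y]$, $\exp X^*=[\exp Z^*,\exp Y]$ and $\exp Y^*=[\exp Z^*,\exp X]$ all lie in $\Gamma$. Hence every basis vector of $\butt$ lies in $\log\Gamma$, and therefore in the rational form $\frg_{\QQ}=\mathrm{span}_{\QQ}(\log\Gamma)$, which is a $\QQ$-Lie subalgebra of $\frg$ by the Malcev theory of lattices in nilpotent groups (cf.\ \cite{ragh}). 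Consequently $\butt\cap\frg_{\QQ}$ is a $\QQ$-form of $\butt$, so $\butt$ is a rational subalgebra. The claim then reduces to the standard rationality criterion: a connected subgroup of a simply connected nilpotent Lie group meets the lattice in a lattice precisely when its Lie algebra is rational. Applying this to $B$ as a subgroup of $G$ with the rational structure $\frg_{\QQ}$ shows that $\Lambda=B\cap\Gamma$ is a lattice in $B\cong\Butt$, completing the proof. The one delicate step is this invocation of rationality relative to the ambient $\frg_{\QQ}$; everything else is the linear-algebra bookkeeping of the Gram matrix.
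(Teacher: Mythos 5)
Your argument is essentially the paper's own proof: you build the basis $\{X,Y,Z,X^*,Y^*,Z^*\}$ inside $\log\Gamma$, observe that $\log\Gamma$ is closed under commutators so that $X^*,Y^*$ stay in it, and then appeal to Malcev theory to get the lattice $\Lambda=B\cap\Gamma$; the only (harmless) deviations are that you verify six-dimensionality and non-degeneracy by an explicit Gram-matrix computation instead of citing the proof of Proposition~\ref{prop_butterfly} (which also neatly sidesteps the final base change there, a point the paper glosses over), and that you use the rationality criterion for subgroups meeting a lattice in a lattice where the paper uses the Zariski-closure characterization of lattices in unipotent groups. One small internal inconsistency to fix: rescaling $Z^*$ to force $(Z,Z^*)=1$ would replace $\log\gamma_3$ by a real multiple and so destroy the membership $\exp Z^*\in\Gamma$ that you need later; simply drop the normalization, since your Gram matrix is already invertible whenever $(Z,Z^*)\neq 0$.
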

\begin{proof}
Let $\frg$ denote the Lie algebra of $G$ and let
$\frg_{\Gamma}=\log(\Gamma)$ denote the discrete subset of $\frg$
which maps to $\Gamma$ under the exponential map.

There exists a Malcev basis of $\frg$ contained in
$\frg_{\Gamma}$ and in particular, as $\frg$ is not abelian,
we can find $X,Y,Z\in\frg_{\Gamma}$ such that $[X,Y]=Z\neq0$.
This is evident from the construction of a Malcev basis
strongly based on $\Gamma$ in Corwin and Greenleaf's proof of
\cite[Theorem 5.1.6]{corwin}.

Recall that $\frg$ is isometric to the $G$-orbits when
endowed with the metric $(\cdot,\cdot)$ induced by the orbit
metric (Remark \ref{rem_orbit_metric}).
Thus there exists $Z^*\in\frg_{\Gamma}$ such that
$(Z,Z^*)\neq0$,
otherwise the Malcev basis would span a space orthogonal 
to $Z$, which contradicts the non-degeneracy of $(\cdot,\cdot)$.
By Corollary \ref{cor_Z_Zstar}, $Z^*\not\in\frz(\frg)$.

We can then complete this to a basis $\{X,Y,Z,X^*,Y^*,Z^*\}$ of a
butterfly subalgebra as in the proof of Proposition
\ref{prop_butterfly}.
Here, $X^*,Y^*$ are elements of $\frg_{\Gamma}$ because they 
arise as commutators of elements of $\frg_{\Gamma}$ and
for 2-step nil\-potent Lie groups $\exp([X,Y])=[\exp(X),\exp(Y)]$ holds by the Baker-Campbell-Hausdorff formula,
so $\frg_{\Gamma}$ is closed under commutators.

Now define $\Lambda$ as the discrete subgroup generated by the
exponentials of the basis above.
By construction $\Lambda\subseteq\Gamma$
and the Zariski closure of $\Lambda$ is a butterfly subgroup
$B\subseteq G$.
\end{proof}

\begin{cor}\label{cor_M6L}
Under the assumptions of Proposition \ref{prop_butterfly_sub},
$M$ is the disjoint union of compact manifolds $G/\Gamma$,
each of which contains a submanifold isometric to $M_6(\Lambda)$
for a certain lattice $\Lambda$ in $\mathrm{B}_6$.
\end{cor}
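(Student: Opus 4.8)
The plan is to decompose $M$ along the $G$-orbits and then to embed the butterfly quotient isometrically into each piece. I would first recall that the orbits $F_p = G.p$ partition $\RR^n$ and that, since $\Gamma \subseteq G$, each orbit is $\Gamma$-invariant; under the orbit diffeomorphism $\theta \colon G \to F_p$ of Remark \ref{rem_affine_map} the $\Gamma$-action on $F_p$ becomes left translation by $\Gamma$ on $G$. Hence the images of distinct orbits in $M = \RR^n/\Gamma$ are disjoint, and $M$ is their disjoint union, indexed by the orbit space $\RR^n/G$. By Proposition \ref{prop_nondeg_abhol} each such image $F_p/\Gamma$ is isometric to the compact flat homogeneous space $G/\Gamma$, which gives the first assertion.

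For the second assertion I would apply Proposition \ref{prop_butterfly_sub} to obtain a butterfly subgroup $B \cong \mathrm{B}_6$ of $G$ together with the lattice $\Lambda = B \cap \Gamma$. The inclusion $B \hookrightarrow G$ descends to a map $\iota \colon B/\Lambda \to G/\Gamma$. This map is well defined and injective exactly because $\Lambda = B \cap \Gamma$: if two cosets have the same image, then their representatives differ by an element of $B \cap \Gamma = \Lambda$. Since $B$ is a closed (unipotent algebraic) subgroup of $G$, the map $\iota$ is an immersion; as $B/\Lambda$ is compact and $G/\Gamma$ is Hausdorff, the injective immersion $\iota$ is in fact a smooth embedding onto a submanifold.

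Finally I would verify that $\iota$ is isometric. The metric on $G/\Gamma$ is the one induced by the orbit metric $(\cdot,\cdot)$, and by Proposition \ref{prop_butterfly} its restriction to the butterfly subalgebra $\butt \subseteq \frg$ is precisely the non-degenerate invariant form of signature $(3,3)$ that defines $\mathrm{B}_6$. Therefore the submanifold metric that $B/\Lambda$ inherits from $G/\Gamma$ agrees with the butterfly metric, so $\iota$ identifies $B/\Lambda = M_6(\Lambda)$ isometrically with a submanifold of $G/\Gamma$. Combining the two assertions shows that $M$ is the disjoint union of compact copies of $G/\Gamma$, each containing an isometric copy of $M_6(\Lambda)$.

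The main obstacle is bookkeeping rather than a hard estimate: one must confirm that the $\Gamma$-action and the orbit metric transported through $\theta$ match the left-invariant structures on $G$, so that $F_p/\Gamma$ is genuinely isometric to $G/\Gamma$ (already supplied by Proposition \ref{prop_nondeg_abhol}), and that the restriction of $(\cdot,\cdot)$ to $\butt$ is truly the butterfly form and not merely some non-degenerate form of signature $(3,3)$. The latter is ensured by the explicit dual butterfly basis constructed in Propositions \ref{prop_butterfly} and \ref{prop_butterfly_sub}, which pins down the isometry type of $(\butt,(\cdot,\cdot))$ completely.
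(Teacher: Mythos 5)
Your proposal is correct and follows essentially the same route as the paper: decompose $M$ into the quotients $F_p/\Gamma\cong G/\Gamma$ of the disjoint $G$-orbits, then use the butterfly subgroup $B$ and lattice $\Lambda=B\cap\Gamma$ from Proposition \ref{prop_butterfly_sub} to exhibit $M_6(\Lambda)=B/\Lambda$ inside each piece. Your additional care about injectivity of $B/\Lambda\to G/\Gamma$ and the identification of the restricted metric with the butterfly form merely fills in details the paper leaves implicit.
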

\begin{proof}
$\RR^n$ is the union of disjoint $G$-orbits $F_p$, each
isometric to $G$.
Hence $M$ is the disjoint union of the $F_p/\Gamma$
which are isometric to $G/\Gamma$.
They are compact, as $G$ is the Zariski closure of $\Gamma$
(see Raghunathan \cite[Theorem 2.1]{ragh}).

If $\Lambda$ is the lattice in the butterfly subgroup $B$
from Proposition \ref{prop_butterfly_sub},
then $B$ is isometric to $\mathrm{B}_6$ and
\[
B/(\Gamma\cap B)=B/\Lambda=M_6(\Lambda).
\]
As $B$ is isometric to an affine subspace $B.p\subseteq F_p$,
it follows that $F_p/\Gamma$ contains a submanifold
(isometric to) $M_6(\Lambda)$.
\end{proof}

In Section \ref{sec_bundle} we  investigate the structure of $M$
as a fiber bundle with compact fiber $G/\Gamma$.

\begin{remark}
The concept of a butterfly algebra can be generalized to higher
dimensions by setting
\[
\frb_{\frn,\omega} = \frn\oplus_{\omega}\frn^*,
\]
where $\frn$ is a $k$-dimensional 2-step nilpotent Lie algebra
and $\omega$ a 2-cocycle for the coadjoint action
(\cite[Section 5.3]{baues}),
and defining a pseudo-scalar product as in (\ref{eq_b6ip}).
Let $\mathrm{B}_{\frn,\omega}$ denote a simply connected Lie group
with Lie algebra $\frb_{\frn,\omega}$.
It is known that every compact flat pseudo-Riemannian homogeneous
space $M$ of split signature $(k,k)$
can be realized as a quotient $\mathrm{B}_{\fra,\omega}/\Gamma$, 
where $\fra$ is an abelian Lie algebra
(see Baues and Globke \cite[Section 3]{BG}).
An interesting open question along these lines
is whether a compact flat pseudo-Riemannian homogeneous
space $M$ with split signature $(k,k)$
and non-abelian fundamental group $\Gamma$
can be realized as $\mathrm{B}_{\frn,\omega}/\Gamma$, where
$\frn$ is some non-abelian 2-step nilpotent Lie algebra.
\end{remark}

\section{A Trivial Bundle}\label{sec_bundle}

Let $G$ be an algebraic Wolf group and
let $ L $ denote its centralizer in $\Iso(\RR^{n}_{s})$.
In this section we will be mostly concerned with the properties
of $G$ as an algebraic group acting on the affine space $\RR^n$,
so we drop the index $s$ from $\RR^n_s$.

\begin{lem}
The $G$-action on $\RR^n$ is principal (as defined in
Section \ref{sec_alg_grp}).
\end{lem}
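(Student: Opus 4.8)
The plan is to make the shearing map $\theta$ completely explicit via the affine description of the orbits in Proposition~\ref{prop_orbit_affine} and then to construct a regular inverse by hand. First I fix a Malcev basis $X_1,\ldots,X_k$ of $\frg$ with $X_i=(A_i,v_i)$ and set $b_i(x)=A_ix+v_i$. By Proposition~\ref{prop_orbit_affine} the orbit map reads $g(t_1,\ldots,t_k).x=x+\sum_{i=1}^k t_i\,b_i(x)$ in exponential coordinates, so
\[
\theta\colon G\times\RR^n\to\RR^n\times\RR^n,\qquad
(g(t),x)\mapsto\Bigl(x+\textstyle\sum_{i=1}^k t_i\,b_i(x),\,x\Bigr)
\]
has polynomial components in $(t,x)$ and is therefore a morphism. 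Since $G$ acts freely, $\theta$ is injective and the map $\beta$ is well defined on the image $R=\theta(G\times\RR^n)$; by the definition in Section~\ref{sec_alg_grp} it then suffices to exhibit $\beta$, equivalently the inverse of $\theta$ onto $R$, as a morphism.

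The key step is to recover the parameters $t_i$ regularly from a pair $(y,x)\in R$. Freeness (equivalently $\dim F_x=k$ in Proposition~\ref{prop_orbit_affine}) means that $b_1(x),\ldots,b_k(x)$ are linearly independent for every $x\in\RR^n$. I let $B(x)$ be the $n\times k$ matrix with these columns, whose entries are affine in $x$, and let $M(x)=B(x)^\top B(x)$ be its Gram matrix for the \emph{standard Euclidean} inner product on $\RR^n$ --- \emph{not} the pseudo-scalar product $\langle\cdot,\cdot\rangle$. Because the columns are independent, $M(x)$ is positive definite, hence invertible, for every real $x$, and $\det M(x)$ is a polynomial that is strictly positive on all of $\RR^n$. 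For $(y,x)\in R$ we have $y-x=B(x)t$, so $B(x)^\top(y-x)=M(x)\,t$ and hence
\[
t=M(x)^{-1}B(x)^\top(y-x).
\]
As $\det M(x)$ never vanishes on the real points, the entries of the right-hand side are regular functions of $(y,x)$; composing with the polynomial map $t\mapsto g(t)=\exp\bigl(\sum_i t_iX_i\bigr)=I+\sum_i t_iX_i$ from Remark~\ref{rem_unipotent} gives $\beta(y,x)=g(t(y,x))$, a morphism. Together with the projection $(y,x)\mapsto x$ this is a regular inverse of $\theta$ onto $R$, so $\theta$ is an algebraic isomorphism and the action is principal.

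The main obstacle is precisely the global invertibility of the Gram matrix as a \emph{regular} function. Over the pseudo-scalar product the orbits can be degenerate, so the naive Gram matrix built from $\langle\cdot,\cdot\rangle$ may be singular and cannot be inverted; replacing it by the positive-definite Euclidean Gram matrix both guarantees invertibility and, crucially, forces $\det M(x)\neq0$ at every real point, which is what makes $M(x)^{-1}$ --- and therefore $\beta$ --- regular on the real algebraic varieties in question. For completeness I would also note that $R$ is Zariski-closed, being cut out by the vanishing of all $(k+1)\times(k+1)$ minors of the matrix $[\,B(x)\mid y-x\,]$, so that $\theta$ is in fact an isomorphism onto a closed subvariety of $\RR^n\times\RR^n$.
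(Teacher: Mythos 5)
Your proof is correct and follows essentially the same route as the paper: express $g(t_1,\ldots,t_k).x=y$ in exponential coordinates as an inhomogeneous linear system in $t$ and observe that its unique solution depends regularly on $(y,x)$. You merely make explicit, via the Euclidean Gram matrix $M(x)=B(x)^\top B(x)$ with nowhere-vanishing determinant on the real points, the step the paper dismisses as ``well-known from linear algebra''.
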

\begin{proof}
As outlined in Remark \ref{rem_affine_map}, the orbit map
$\theta_p:G\to F_p$, $g\mapsto g.p$, is an affine map in $g$.

If we express
$g=g(t_1,\ldots,t_k)$ in exponential coordinates, then for
fixed $p,q$ in the same $G$-orbit the equation $g(t_1,\ldots,t_k).p=q$
forms an inhomogeneous system of linear equations with unknowns
$t_1,\ldots,t_k$. It has a unique solution because the action
is free. It is well-known from linear algebra that the
solution to such a system can be expressed by expressions
polynomial in the components of $p$ and $q$.
Hence the map $\beta(q,p)=g_{qp}$ with $g_{qp}.p=q$ is a
morphism and the $G$-action is principal.
\end{proof}

\begin{thm}\label{thm_principal_bundle}
Assume that $ L =\mathrm{Z}_{\Iso(\RR^{n}_{s})}(G )$ acts transitively on $\RR^n$.
The orbit space $\RR^n/G $ is isomorphic to $\RR^{n-k}$ as an affine algebraic
variety,
and there exists an algebraic cross section $\sigma:\RR^n/G \to\RR^n$.
In particular, $\RR^n$ is a trivial algebraic principal $G$-bundle
\begin{equation}
G  \to \RR^n \to \RR^{n-k}.
\label{eq_principal_bundle}
\end{equation}
\end{thm}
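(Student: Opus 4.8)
The plan is to derive the theorem from the principality of the $G$-action (already established in the preceding lemma) by supplying two further ingredients: the orbit space $\RR^n/G$ must exist as an affine variety isomorphic to $\RR^{n-k}$, and the resulting algebraic principal $G$-bundle must admit a cross section. I would carry this out in three steps: construct the quotient, produce the section, and assemble the trivialization.

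First I would construct the quotient. Since $L=\mathrm{Z}_{\Iso(\RR^n_s)}(G)$ acts transitively on $\RR^n$, fixing a base point $p$ presents $\RR^n$ as a homogeneous space $U/U_p$, where $U$ is a unipotent algebraic group acting transitively on $\RR^n$ (extracted from $L$ via the structure theory of Section \ref{sec_alg_grp}) and $U_p$ is the isotropy group at $p$. Under this identification the $G$-orbit $F_p$ corresponds to a coset space $U'/U_p$ for an intermediate algebraic subgroup $U_p\le U'\le U$, so that the orbit space $\RR^n/G$ is identified with $U/U'$. Rosenlicht's theorem \cite{rosenlicht4} for quotients of unipotent groups in characteristic zero then shows that $U/U'$ is an affine variety isomorphic to an affine space; comparing dimensions gives $\dim(U/U')=n-\dim F_p=n-k$, so $\RR^n/G\cong\RR^{n-k}$ and the orbit projection $\pi\colon\RR^n\to\RR^{n-k}$ is a quotient morphism.

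Next I would produce an algebraic cross section $\sigma\colon\RR^{n-k}\to\RR^n$. Here I would invoke the established triviality of algebraic principal bundles for unipotent groups over an affine base (Appendix \ref{sec_algebraic}): as $G$ is unipotent and acts freely and principally on the affine variety $\RR^n$ with affine quotient $\RR^{n-k}$, the morphism $\pi$ splits. Finally I would assemble the trivialization from $\sigma$ together with the morphism $\beta\colon\RR^n\times\RR^n\to G$, $(y,x)\mapsto g_{yx}$ coming from principality: the map $\RR^{n-k}\times G\to\RR^n$, $(\bar x,g)\mapsto g.\sigma(\bar x)$, is an algebraic isomorphism with inverse $x\mapsto\bigl(\pi(x),\beta(x,\sigma(\pi(x)))\bigr)$, which exhibits $\RR^n$ as the trivial algebraic principal $G$-bundle (\ref{eq_principal_bundle}).

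The hard part will be the first step. A priori the orbit space of an algebraic group action need not be a variety at all, let alone an affine one, so everything hinges on realizing $\RR^n/G$ as a genuine geometric quotient $U/U'$ of unipotent groups. The crux is to check that $U'$ is a closed algebraic subgroup of $U$ and that the fibers of $U\to U/U'$ are exactly the $G$-orbits; once this identification is secured, Rosenlicht's theorem supplies the affine-space structure and the unipotent bundle triviality supplies the section, after which the remaining verifications are routine.
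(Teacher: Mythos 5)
Your proposal follows essentially the same route as the paper: identify $\RR^n$ with $U/U_p$ for the unipotent radical $U$ of $L$, realize $\RR^n/G$ as $U/U'$ where $U'$ is the stabilizer of the orbit $F_p$ (the paper's $U_{F_p}$), apply Rosenlicht's theorem to get $\RR^{n-k}$, and then invoke the appendix's triviality result for unipotent principal bundles to obtain the section. You have also correctly isolated the crux — verifying that $U'$ is algebraic and that the fibers of $U\to U/U'$ are exactly the $G$-orbits — which is precisely what the paper's Propositions \ref{prop_RFp} and \ref{prop_UUp_quotient} and Lemma \ref{lem_orbits_equivalent} establish.
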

\begin{proof}
$\RR^n/G $ exists as an affine variety and 
as such is isomorphic to $\RR^{n-k}$
by Proposition \ref{prop_UUp_quotient} below.
We show $\pi:\RR^n\to\RR^n/G \cong\RR^{n-k}$ is a
principal bundle\index{principal bundle} for $G $:
$\RR^n$ and $\RR^{n-k}$ are smooth (hence normal) varieties.
As a consequence of homogeneity and Rosenlicht \cite[Theorem 10]{rosenlicht1},
$\RR^{n-k}$ can be covered by
Zariski-open sets $W$ such that on each $W$ there exists a local
algebraic cross section\index{cross section}
$\sigma_W:W\to\RR^n$, and $\pi$ is a locally trivial fibration.

The $G $-action is principal,
so for any $p\in\RR^n$ and $g\in G $, the
map $\beta(g.p,p)=g$ defined on the graph of the action is a morphism.
Thus the bundle's coordinate changes by $G$
are morphisms and the bundle is algebraic.
The existence of an algebraic cross section $\sigma:\RR^n/G\to\RR^n$ now follows 
from Theorem \ref{thm_trivial_bundle}.
\end{proof}

If $G$ is the real Zariski closure of the fundamental group
$\Gamma$ of a flat pseudo-Riemannian homogeneous space $M$,
then $G$ is an algebraic Wolf group and we can apply
Theorem \ref{thm_principal_bundle} to its action on $\RR^n$.
We can then take the quotient for the action of $\Gamma$
and obtain the following:

\begin{thm}\label{thm_pb2}
Let $M=\RR^{n}_s/\Gamma$ be a complete flat pseudo-Riemanninan homo\-geneous
manifold and $k=\rk\Gamma$.
Let $G$ denote the real Zariski closure of $\Gamma$.
Then $M$ is a trivial fiber bundle
\begin{equation}
G /\Gamma \to M \to \RR^{n-k}
\end{equation}
with structure group $G/\Gamma_{\mathrm{n}}$, where $\Gamma_{\mathrm{n}}$
is the largest subgroup of $\Gamma$ which is a normal subgroup
of $G$.
\end{thm}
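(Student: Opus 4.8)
The plan is to obtain \thmref{thm_pb2} from the triviality established in \thmref{thm_principal_bundle} by dividing out the discrete subgroup $\Gamma\subset G$. First I would verify the hypothesis of \thmref{thm_principal_bundle}. Since $M$ is homogeneous, \thmref{thm_transitive} shows that $\mathrm{Z}_{\Iso(\RR^n_s)}(\Gamma)$ acts transitively on $\RR^n$, and because $\Gamma$ and its Zariski closure $G$ have the same centralizer, $L=\mathrm{Z}_{\Iso(\RR^n_s)}(G)$ acts transitively as well. Moreover $G$ is an algebraic Wolf group, and as $\Gamma$ is a lattice in $G$ we have $k=\dim G=\rk\Gamma$. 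Thus \thmref{thm_principal_bundle} applies and yields an algebraic cross section $\sigma\colon\RR^{n-k}\to\RR^n$, hence a $G$-equivariant trivialization $\RR^n\cong G\times\RR^{n-k}$ in which the point $g.\sigma(b)$ corresponds to $(g,b)$ and $G$ acts by left translation on the first factor.

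Next I would restrict the action to $\Gamma$. As $\Gamma$ acts as the restriction of the principal $G$-action, under the trivialization it acts by $\gamma.(g,b)=(\gamma g,b)$, that is, by left translation on the $G$-factor and trivially on the base. The quotient therefore splits as a product, giving a diffeomorphism
\[
M=\RR^n/\Gamma\;\cong\;(G/\Gamma)\times\RR^{n-k},
\]
which exhibits $M$ as a trivial bundle over $\RR^{n-k}$ with fiber $G/\Gamma$. This fiber is compact because $\Gamma$, being discrete with Zariski closure $G$, is a lattice in $G$ (Raghunathan \cite[Theorem 2.1]{ragh}).

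It remains to identify the structure group. The bundle $M\to\RR^{n-k}$ is associated to the principal $G$-bundle $\RR^n\to\RR^{n-k}$ with fiber the nilmanifold $G/\Gamma$, on which $G$ acts through translation. This action is not effective: an element $h\in G$ acts trivially precisely when $g^{-1}hg\in\Gamma$ for all $g\in G$, so its kernel is the normal core
\[
\bigcap_{g\in G}g\,\Gamma\,g^{-1}=\Gamma_{\mathrm{n}},
\]
the largest subgroup of $\Gamma$ that is normal in $G$. Hence the effective structure group is $G/\Gamma_{\mathrm{n}}$, as claimed.

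The argument is essentially formal once \thmref{thm_principal_bundle} is available; the only steps requiring care are the bookkeeping that the $\Gamma$-quotient of the trivial principal $G$-bundle is again trivial with the stated compact fiber, and the identification of $\Gamma_{\mathrm{n}}$ as the kernel of the $G$-action on $G/\Gamma$. The genuinely substantive content, namely the existence of the affine-algebraic quotient $\RR^n/G$ and of a global algebraic cross section, has already been absorbed into \thmref{thm_principal_bundle}, so no real obstacle remains at this stage.
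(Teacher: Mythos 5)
Your proposal is correct and follows the same route as the paper: the paper simply takes the quotient of the trivial principal bundle of Theorem~\ref{thm_principal_bundle} by $\Gamma$ and cites Steenrod for the identification of the structure group as $G/\Gamma_{\mathrm{n}}$. You have merely made explicit the details the paper leaves implicit (verifying transitivity of the centralizer, $k=\dim G=\rk\Gamma$, compactness of $G/\Gamma$, and the computation of the kernel of the $G$-action on $G/\Gamma$ as the normal core), all of which check out.
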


For the assertion on the structure group see
Steenrod \cite[Theorem 7.4]{steenrod}.

In order to complete the proof of Theorem
\ref{thm_principal_bundle},
it remains to show that $\RR^n/G$ is in fact isomorphic to
$\RR^{n-k}$ as an affine variety
(Proposition \ref{prop_UUp_quotient}).
To this end, we use a
result due to Rosenlicht \cite[Theorem 5]{rosenlicht4} which 
states that any algebraic homogeneous space for $\RR$-defined
unipotent groups
is isomorphic to some $\RR^m$ as an affine variety.
So we need to identify $\RR^n/G$ with a homogeneous space
$U/U'$ for some unipotent algebraic group $U$ with algebraic
subgroup $U'$:
The centralizer $L$ of $G$ is an algebraic subgroup of
$\Iso(\RR^{n}_{s})$. Choose $U$ to be the unipotent radical
of $L$. The subgroup $U'$ we will be the group
defined in (\ref{eq_UFp}) below.

Because the reductive part of $L$ always has a fixed point
(Baues \cite[Lemma 2.2]{baues}), transitivity of $L$
depends on $U$:

\begin{prop} \label{prop_unirad_transitive}
The centralizer $ L $ acts transitively on $\RR^{n}$ if and only if
its uni\-potent radical $ U \subset  L $ acts transitively.
\end{prop}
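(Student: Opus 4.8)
The plan is to reduce everything to the Levi--Mostow decomposition of the real algebraic group $L$ together with the fixed-point property of its reductive part that is quoted just before the statement. One implication requires no work: since $U\subseteq L$, transitivity of $U$ on $\RR^n$ immediately forces transitivity of $L$. All the content lies in the converse, so I would concentrate on showing that transitivity of $L$ implies transitivity of its unipotent radical $U$.

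For the converse I would write $L=R\ltimes U$, where $U$ is the unipotent radical and $R$ a maximal reductive (Levi) subgroup; such a decomposition is available because we work in characteristic $0$. The two facts I would then combine are, first, that $U$ is normal in $L$, so that the set-products coincide, $L=RU=UR$; and second, that by Baues \cite[Lemma 2.2]{baues} the reductive part $R$ has a common fixed point $p_0\in\RR^n$, that is $R.p_0=\{p_0\}$. With these in hand the whole argument collapses to a single orbit computation at this distinguished point:
\[
L.p_0=(UR).p_0=U.(R.p_0)=U.p_0 .
\]
Assuming $L$ acts transitively gives $L.p_0=\RR^n$, so the displayed equality yields $U.p_0=\RR^n$. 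Thus every point of $\RR^n$ already lies in the $U$-orbit of $p_0$, which is exactly transitivity of $U$, and the proposition follows.

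Since the computation itself is immediate, the only steps needing care are the two structural inputs, and these are where I would expect any difficulty to sit: one must be sure the Levi decomposition applies to $L$ as a real algebraic group and that the fixed point supplied by \cite[Lemma 2.2]{baues} is genuinely fixed by all of $R$. Both are standard in characteristic $0$ and the fixed-point statement is precisely the cited lemma, so no real obstacle remains; I would, however, make explicit that it is the \emph{normality} of $U$ that lets me rewrite $L=UR$ and thereby absorb the $R$-action into the single fixed point $p_0$, as this is the one place where the unipotent radical and the reductive complement genuinely interact.
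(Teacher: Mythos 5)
Your proof is correct and follows essentially the same route as the paper, which justifies the proposition by exactly the observation you formalize: the reductive part of $L$ has a fixed point $p_0$ (Baues, Lemma 2.2), so writing $L=UR$ by normality of $U$ gives $L.p_0=U.p_0$, while the other direction is immediate from $U\subseteq L$. No gaps.
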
 

In the following, we assume that $L$ (hence $U$) acts transitively.
For $p\in\RR^n$, its stabilizer $U_p$ is an algebraic subgroup of $U$.

\begin{prop} \label{prop_Rn_quotient}
The quotient $ U / U _p$ is isomorphic to $\RR^n$ as an affine
variety.
\end{prop}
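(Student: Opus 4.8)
The plan is to realize the isomorphism through the orbit map at $p$ and to show that the transitivity of $U$ turns this map into an algebraic isomorphism. Since $U$ is the unipotent radical of the $\RR$-algebraic group $L$ and $p\in\RR^n$, the stabilizer $U_p=\{u\in U\mid u.p=p\}$ is cut out by $\RR$-polynomial equations, hence is an $\RR$-defined closed subgroup of $U$. The homogeneous space $U/U_p$ therefore exists as an $\RR$-defined affine variety: the quotient of a unipotent algebraic group by a closed subgroup is affine, and by Rosenlicht \cite[Theorem 5]{rosenlicht4} it is even isomorphic to some affine space $\RR^m$. The goal is to identify this $U/U_p$ with the specific $\RR^n$ on which $U$ acts.

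First I would record that the orbit map $\theta_p\colon U\to\RR^n$, $u\mapsto u.p$, is an $\RR$-morphism: since $U\subset\Iso(\RR^n_s)\subset\GL_{n+1}(\RR)$ acts by affine transformations, $u\mapsto u.p$ is given by expressions that are polynomial (indeed affine-linear) in the matrix entries of $u$. By Proposition \ref{prop_unirad_transitive} the group $U$ acts transitively, so $\theta_p$ is surjective, and its fibers are precisely the left cosets of $U_p$. Consequently $\theta_p$ factors through a \emph{bijective} $\RR$-morphism $\bar\theta_p\colon U/U_p\to\RR^n$. Comparing dimensions, $m=\dim(U/U_p)=\dim U-\dim U_p=\dim U.p=n$, which is consistent with the target being $\RR^n$.

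The remaining, and genuinely load-bearing, step is to upgrade $\bar\theta_p$ from a bijection to an isomorphism of affine varieties; a bijective morphism need not be an isomorphism in general. This is where characteristic zero is essential: working over $\RR\subset\CC$, the orbit map of an algebraic group action is separable, so by the standard theory of orbits (see Borel \cite{borel}) the induced map $U/U_p\to U.p$ is an isomorphism onto the orbit, and transitivity gives $U.p=\RR^n$. Hence $U/U_p\cong\RR^n$ as affine varieties, with the isomorphism furnished by $\bar\theta_p$. The main obstacle is exactly this separability/isomorphism passage; an explicit alternative, in the spirit of the principal-action argument given earlier for $G$, is to fix a Malcev basis of $U$ adapted to $U_p$ so that $U/U_p$ is parametrized by exponential coordinates, and then to invert the polynomial orbit map coordinate-by-coordinate using the unipotent (triangular) structure of the action, exhibiting the inverse of $\bar\theta_p$ as a morphism as well.
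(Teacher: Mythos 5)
Your proposal is correct and follows essentially the same route as the paper: the paper's two-line proof cites Borel for the affineness of $U/U_p$ and then asserts that $\RR^n$ is a geometric quotient for the $U_p$-action on $U$, which is precisely the content of your separability argument that the bijective morphism $\bar\theta_p\colon U/U_p\to\RR^n$ induced by the orbit map is an isomorphism. You have merely spelled out in more detail the standard orbit-map lemma that the paper invokes implicitly.
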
 
\begin{proof} 
The quotient $U/U_p$ is an affine variety because $U$ is unipotent (see Borel \cite[Corollary 6.9]{borel}).
As $\RR^{n}$ is a geometric quotient for the action of
$U _p$ on $U$, it is isomorphic to $U/U _p$ as an
affine variety.
\end{proof}

Let $F_p$ denote the $G$-orbit through $p$.
It is helpful to relate the action of $G$ on $\RR^n$ to the
right-action on $U/U_p$ by the group
\begin{equation}
 U _{F_p} = \{ u\in  U  \mid u.F_p\subseteq F_p\}.
\label{eq_UFp}
\end{equation}

\begin{prop} \label{prop_RFp}
$ U _{F_p}$ is an algebraic subgroup of $ U $ acting transitively on
$F_p$, and $U_p$ is a normal subgroup of $U_{F_p}$.
\end{prop}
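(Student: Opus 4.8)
The plan is to exploit the single structural fact that $U$ centralizes $G$, since $U$ is contained in $L=\mathrm{Z}_{\Iso(\RR^{n}_{s})}(G)$. This commutativity means that for $u\in U$ and $g\in G$ one has $u.(g.p)=g.(u.p)$, so $U$ permutes the $G$-orbits; explicitly,
\[
u.F_p = u.(G.p) = G.(u.p) = F_{u.p}.
\]
Two $G$-orbits are either equal or disjoint, so the defining condition $u.F_p\subseteq F_p$ is equivalent to $F_{u.p}=F_p$, i.e.\ to $u.p\in F_p$. Hence
\[
U_{F_p}=\{u\in U \mid u.p\in F_p\}=\{u\in U \mid u.F_p=F_p\}.
\]
The second description shows at once that $U_{F_p}$ is a subgroup (it is the stabilizer of the point $F_p$ for the induced $U$-action on the set of $G$-orbits), so in particular it is genuinely a group and not merely a monoid. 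For algebraicity I would use the first description: by Proposition~\ref{prop_orbit_affine} the orbit $F_p$ is an affine subspace of $\RR^n$, hence Zariski-closed, and the orbit map $u\mapsto u.p$ is a morphism $U\to\RR^n$; therefore $U_{F_p}$ is the preimage of a closed set under a morphism, hence Zariski-closed, and thus an algebraic subgroup of $U$.

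Transitivity on $F_p$ is then nearly immediate from the transitivity of $U$ on $\RR^{n}$ (Proposition~\ref{prop_unirad_transitive} together with the standing assumption that $L$, hence $U$, acts transitively): given $q\in F_p$, choose $u\in U$ with $u.p=q$; since $q\in F_p$ we have $u\in U_{F_p}$, and $u$ sends $p$ to $q$. Thus the $U_{F_p}$-orbit of $p$ is all of $F_p$.

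For the normality of $U_p$ in $U_{F_p}$ I would argue via conjugate stabilizers. For any $u\in U$ one has $U_{u.p}=u\,U_p\,u^{-1}$. The key point is that along a single $G$-orbit the $U$-stabilizer is constant: for $g\in G$ and $h\in U$, the identity $h.(g.p)=g.(h.p)$ shows that $h.(g.p)=g.p$ holds if and only if $h.p=p$, so $U_{g.p}=U_p$ for every $g\in G$. Now if $u\in U_{F_p}$ then $u.p\in F_p=G.p$, say $u.p=g.p$, and therefore
\[
u\,U_p\,u^{-1}=U_{u.p}=U_{g.p}=U_p,
\]
which is precisely the normality of $U_p$ in $U_{F_p}$. (The containment $U_p\subseteq U_{F_p}$ is clear, as $p\in F_p$.)

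The content of the proposition is thin once the commutativity of $U$ and $G$ is brought to bear, so I do not expect a serious obstacle; the only two points requiring care are the places where commutativity is invoked. The first is showing that $u.F_p\subseteq F_p$ in fact forces $u.F_p=F_p$, which is what upgrades $U_{F_p}$ from a monoid to a group. The second is showing that the $U$-stabilizer is constant along $G$-orbits, which is what makes the conjugate $u\,U_p\,u^{-1}$ return to $U_p$. The algebraicity, resting on $F_p$ being an affine subspace by Proposition~\ref{prop_orbit_affine}, is the remaining routine verification.
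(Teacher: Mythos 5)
Your proof is correct and follows essentially the same route as the paper's: transitivity of $U_{F_p}$ on $F_p$ from transitivity of $U$, algebraicity from $U_{F_p}$ being the preimage of the Zariski-closed orbit $F_p$ under the orbit map, and normality of $U_p$ from the fact that $U$ commutes with $G$. You add two worthwhile details the paper leaves implicit — that $u.F_p\subseteq F_p$ is equivalent to $u.p\in F_p$ (which justifies identifying $U_{F_p}$ with the preimage of $F_p$ and shows it is a group rather than a monoid), and the reformulation of normality via constancy of stabilizers along a $G$-orbit, which is just the paper's observation that $U_p$ fixes $F_p$ pointwise in different clothing.
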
 
\begin{proof} 
$U_{F_p}$ acts transitively on $F_p$ because $U$ acts transitively.
The orbit $F_p$ is Zariski-closed because $G$ is unipotent by Lemma \ref{lem_wolf1}.
So $U_{F_p}$ is an algebraic group as it is the preimage of $F_p$ under
the orbit map $U\to\RR^n$, $u\mapsto u.p$.

As $U_p$ commutes with $G$, it fixes every point of $F_p$.
Hence it is invariant under conjugation with $U_{F_p}$.
\end{proof}

Fix an element $p\in\RR^n$ and let $\tilde{U}=U_{F_p}/U_p$.
The $G$-action is free, so for $u U_p\in\tilde{U}$ there exists a unique element $g_u\in G$
satisfying $u.p=g_u.p$. Then the map
\begin{equation}
\Phi: \tilde{U} \to  G ,\quad u U _p\mapsto g_u^{-1}
\label{eq_isomorphism}
\end{equation}
is an isomorphism of algebraic groups. 

By Proposition \ref{prop_Rn_quotient} there exists an
isomorphism of affine varie\-ties
\[
\Psi: U / U _p\to\RR^n,\quad uU_p\mapsto u.p.
\]

\begin{lem}\label{lem_orbits_equivalent}
$\Psi$ induces a bijection from the orbits of the right-action of $\tilde{ U }$
on $ U / U _p$ to the orbits of $G $ on $\RR^n$.
\end{lem}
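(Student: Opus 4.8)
The plan is to show that $\Psi$ intertwines the two orbit decompositions: two cosets in $U/U_p$ lie in a common $\tilde U$-orbit if and only if their $\Psi$-images lie in a common $G$-orbit. Since $\Psi$ is already a bijection of the underlying sets (Proposition~\ref{prop_Rn_quotient}), this equivalence of relations will force $\Psi$ to descend to a bijection of orbit sets, which is exactly the assertion.

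First I would make the right-action explicit and check it is well-defined. Because $U_p$ is a normal subgroup of $U_{F_p}$ (Proposition~\ref{prop_RFp}), for $w\in U_{F_p}$ we have $U_p w=wU_p$, so the formula $(uU_p)\cdot(wU_p)=uwU_p$ defines a right-action of $\tilde U=U_{F_p}/U_p$ on $U/U_p$; independence of both representatives is immediate from $hU_p=U_p$ for $h\in U_p$. The $\tilde U$-orbit of $uU_p$ is then the image of the coset $uU_{F_p}$ in $U/U_p$, and applying $\Psi$ sends it to $\{uw.p \mid w\in U_{F_p}\}=u.(U_{F_p}.p)=u.F_p$, using that $U_{F_p}$ acts transitively on $F_p$ (Proposition~\ref{prop_RFp}).

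The key step, and the real content of the lemma, is the identity $u.F_p=G.(u.p)$. This holds precisely because $U\subseteq L=\mathrm{Z}_{\Iso(\RR^n_s)}(G)$ centralizes $G$: writing $F_p=G.p$, for each $g\in G$ we have $u.(g.p)=(ug).p=(gu).p=g.(u.p)$, so $u.F_p=G.(u.p)$, which is the $G$-orbit through $\Psi(uU_p)=u.p$. Thus $\Psi$ carries the $\tilde U$-orbit of $uU_p$ bijectively onto the $G$-orbit through $u.p$.

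Finally I would read off the equivalence of relations in both directions. For the forward direction, if $uU_p$ and $uwU_p$ with $w\in U_{F_p}$ lie in one $\tilde U$-orbit, then $w.p\in F_p$, so $w.p=g.p$ for some $g\in G$, and $uw.p=u.(g.p)=g.(u.p)$ shows their images share a $G$-orbit. Conversely, if $u.p$ and $u'.p$ share a $G$-orbit, say $u'.p=g.(u.p)=u.(g.p)$, then transitivity of $U_{F_p}$ on $F_p$ yields $w\in U_{F_p}$ with $w.p=g.p$, whence $u'.p=uw.p$; since $\Psi$ is injective this forces $u'U_p=uwU_p$, placing the two cosets in one $\tilde U$-orbit. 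Hence $\Psi$ induces a bijection on orbit sets. I expect no deep obstacle here: the only thing to keep straight is the well-definedness of the $\tilde U$-action together with the two rearrangements $u.(g.p)=g.(u.p)$, and the entire statement ultimately rests on $U$ commuting with $G$.
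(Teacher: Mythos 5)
Your proposal is correct and follows essentially the same route as the paper: the heart of both arguments is the commutation $u.(g.p)=g.(u.p)$ coming from $U\subseteq\mathrm{Z}_{\Iso(\RR^n_s)}(G)$, which identifies the image of a $\tilde U$-orbit with a $G$-orbit. The paper phrases the correspondence through the isomorphism $\Phi$ while you use transitivity of $U_{F_p}$ on $F_p$ and check well-definedness and both inclusions explicitly, but these are only differences of bookkeeping, not of substance.
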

\begin{proof}
Let $\tilde{u}\in\tilde{ U }$ and $g=\Phi(\tilde{u})$.
For any $u\in U $,
$\Psi(u U _p)=u.p\in\RR^n$, and
$u U _p.\tilde{u}=u\tilde{u} U _p$ maps to
$\Psi(u\tilde{u} U _p)=u\tilde{u}.p$.
By definition of $\Phi$ and because $U$ centralizes $G$,
\[
u\tilde{u}.p = ug^{-1}.p = g^{-1} u.p.
\]
So $\Psi$ maps the orbit $u U _p.\tilde{ U }$ to the orbit $G .(u.p)$.
\end{proof}

\begin{remark}
Note that $\Psi$ is not equivariant with respect to the action
of $\tilde{U}=G$, but rather \emph{anti-equivariant}.
By this we mean that
$\Psi(\overline{u}.\tilde{u})=\Phi(\tilde{u})^{-1}.\Psi(\overline{u})$ holds
for all $\tilde{u}\in\tilde{U}$ and $\overline{u}\in U/U_p$.
\end{remark}

\begin{prop}\label{prop_UUp_quotient}
$U/U_{F_p}$ is a geometric quotient for the action of $G $ on
$\RR^n$
and isomorphic to $\RR^{n-k}$ as an affine algebraic
variety.
\end{prop}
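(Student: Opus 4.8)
The plan is to transport the entire question through the variety isomorphism $\Psi\colon U/U_p\to\RR^n$ of Proposition~\ref{prop_Rn_quotient}. Recall from Lemma~\ref{lem_orbits_equivalent} that $\Psi$ carries the orbits of the right-action of $\tilde U=U_{F_p}/U_p$ on $U/U_p$ bijectively onto the $G$-orbits in $\RR^n$, and that it does so anti-equivariantly via the group isomorphism $\Phi$. Since $\Psi$ is an isomorphism of affine varieties, it therefore suffices to exhibit a geometric quotient for the $\tilde U$-action on $U/U_p$ and to identify it, as a variety, with $U/U_{F_p}$; the corresponding statements for the $G$-action on $\RR^n$ then follow by transport of structure.

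First I would produce the quotient on the $U/U_p$ side. By Proposition~\ref{prop_RFp} we have $U_p\triangleleft U_{F_p}\subseteq U$, so the canonical projection
\[
q\colon U/U_p\longrightarrow U/U_{F_p}
\]
is the geometric quotient of $U/U_p$ by the induced right-action of $\tilde U=U_{F_p}/U_p$: this is the standard quotient for a chain of closed subgroups with the smaller one normal in the larger, its fibres are exactly the $\tilde U$-orbits, and $q$ is surjective and open. Because $U$ is unipotent, $U/U_{F_p}$ is an affine variety (Borel~\cite[Corollary~6.9]{borel}).

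Next I would transfer this across $\Psi$. Set $\pi=q\circ\Psi^{-1}\colon\RR^n\to U/U_{F_p}$. As $\Psi$ is a variety isomorphism and $q$ is surjective and open, $\pi$ is a surjective open morphism; by the orbit correspondence of Lemma~\ref{lem_orbits_equivalent} its fibres are precisely the $G$-orbits. For the ring condition let $V\subseteq\RR^n$ be open. Pullback along $\Psi$ identifies $\overline{\RR}[V]^G$ with $\overline{\RR}[\Psi^{-1}(V)]^{\tilde U}$, since a regular function is $G$-invariant on $V$ exactly when its $\Psi$-pullback is constant on the $\tilde U$-orbits meeting $\Psi^{-1}(V)$ (here the anti-equivariance is harmless, as it only relabels each $\tilde U$-orbit by the matching $G$-orbit). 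The geometric-quotient property of $q$ then gives $\overline{\RR}[\Psi^{-1}(V)]^{\tilde U}\cong\overline{\RR}[q(\Psi^{-1}(V))]=\overline{\RR}[\pi(V)]$. Composing, $\pi$ induces $\overline{\RR}[\pi(V)]\cong\overline{\RR}[V]^G$, so $\pi$ is a quotient morphism and $U/U_{F_p}$ is a geometric quotient for the $G$-action on $\RR^n$.

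Finally I would identify the quotient with an affine space. A dimension count gives $\dim U/U_p=n$ by Proposition~\ref{prop_Rn_quotient}, while $\dim\tilde U=\dim G=k$ because $\Phi$ is an isomorphism, whence $\dim U/U_{F_p}=n-k$. Since $U/U_{F_p}$ is an algebraic homogeneous space for the $\RR$-defined unipotent group $U$, Rosenlicht's theorem~\cite[Theorem~5]{rosenlicht4} yields $U/U_{F_p}\cong\RR^{n-k}$ as an affine variety. The main point requiring care is the ring-of-invariants condition in the middle step: one must check that the $\tilde U$-invariants and the $G$-invariants genuinely match under the anti-equivariant $\Psi$, and that $q$ is a geometric quotient in the sense of Section~\ref{sec_alg_grp}; both rest on the orbit correspondence and on the good quotient theory available for unipotent groups.
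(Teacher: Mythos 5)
Your proposal is correct and follows essentially the same route as the paper: both transport the canonical quotient $U/U_p\to U/U_{F_p}$ through $\Psi$, use Lemma~\ref{lem_orbits_equivalent} to identify the fibres with the $G$-orbits, count dimensions via $\dim\tilde U=\dim G=k$, and invoke Rosenlicht's theorem to identify $U/U_{F_p}$ with $\RR^{n-k}$. The only cosmetic difference is that you verify the ring-of-invariants condition by hand where the paper cites Borel's criterion that a surjective open morphism of this kind is a quotient morphism.
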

We will write $\RR^n/G $ for the quotient $ U / U _{F_p}$.
\begin{proof}
$ U / U _{F_p}$ is an algebraic homogeneous space
for a unipotent group. By a theorem of Rosenlicht \cite[Theorem 5]{rosenlicht4},
the quotient $ U / U _{F_p}$ of $\RR$-defined unipotent groups is algebraically isomorphic to an affine space $\RR^m$.
Moreover, $ U / U _{F_p}=( U / U _p)/( U _{F_p}/ U _p)=( U / U _p)/\tilde{ U }$,
so
\[
\dim U / U _{F_p}=\dim U / U_p-\dim\tilde{ U }=\dim\RR^n-\dim G =n-k.
\]
Let $\pi_0: U / U _p\to U / U _{F_p}$ denote the quotient map.
So we have morphisms
\[
\xymatrixcolsep{4pc}\xymatrix{
 U / U _p \ar[d]_{\pi_0} & \RR^n \ar[l]_{\Psi^{-1}}\ar@{-->}[ld]^{\pi} \\
 U / U _{F_p} \ar@{=}[d] \\
\RR^{n-k}
}
\]
where we define $\pi=\pi_0\circ\Psi^{-1}$.
Since $\Psi$ is an isomorphism and $\pi_0$ a quotient map, the map
$\pi$ is a surjective open morphism.
So $\pi$ is a quotient map by \cite[Lemma 6.2]{borel}.
Let $\overline{q}=u U _{F_p}\in U / U _{F_p}$, where $q=u.p$.
Then the fiber of $\pi$ over $\overline{q}$ is
\[
\pi^{-1}(\overline{q})=\Psi(\pi_0^{-1}(\overline{q}))=\Psi(u U _p. U _{F_p})
=G.(u.p)=F_q,
\]
the orbit of $G $ through $q$ (use Lemma \ref{lem_orbits_equivalent} for the
third equality).
Hence $ U / U _{F_p}$ is a geometric quotient for the $G $-action.
\end{proof}

\appendix
\section{Cross Sections for Unipotent Group Actions}\label{sec_algebraic}

In this appendix we show that an affine algebraic principal bundle for
a uni\-potent algebraic group is a trivial bundle.
This result is known, see for example Kraft and Schwarz \cite[Proposition IV.3.4]{KS},
but we give a proof for the reader's convenience and to ensure we
can take all cross sections to be defined over the real numbers.

In the following, an \emph{algebraic principal bundle} will mean a
$G$-principal bundle $\pi:V\to W$ where $V$ and $W$ are smooth affine
varieties, $G$ is an algebraic group acting principally on $V$
(as defined in Section \ref{sec_alg_grp})
such that the bundle's coordinate changes are algebraic maps.
For the applications in this article, we assume all varieties and 
morphisms to be defined over $\RR$ (see also Remark \ref{rem_R_def}).

\begin{lem}\label{lem_trivial_bundle0}
Let $V,W$ be smooth affine varieties and $\pi:V\to W$ an algebraic
principal bundle for a uni\-potent action of the additive group
$\mathrm{G}_{\mathrm{a}}$.
Then there exists an algebraic cross section $\sigma:W\to V$.
\end{lem}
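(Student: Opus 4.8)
The plan is to reduce the statement to the vanishing of the first coherent cohomology of the affine base. Since $\pi\colon V\to W$ is an algebraic principal bundle in the sense of Section~\ref{sec_alg_grp}, it is Zariski-locally trivial, so there is a finite cover of $W$ by affine opens $W_i$ together with algebraic cross sections $\sigma_i\colon W_i\to V$. On each overlap $W_i\cap W_j$ the two sections differ by the action of a unique element of $\mathrm{G}_{\mathrm{a}}$, which produces a regular function $g_{ij}\in\mathcal{O}(W_i\cap W_j)$ with $\sigma_i=\sigma_j\cdot g_{ij}$. Because the structure group is the \emph{additive} group, the transition data satisfy the additive cocycle relation $g_{ik}=g_{ij}+g_{jk}$ on triple overlaps, so $\{g_{ij}\}$ is a \v{C}ech $1$-cocycle with values in the structure sheaf $\mathcal{O}_W$.

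Next I would observe that the isomorphism class of the bundle is precisely the class of this cocycle in $H^1(W,\mathcal{O}_W)$, and that a global cross section exists if and only if this class vanishes (the bundle then being isomorphic to $W\times\mathrm{G}_{\mathrm{a}}$ as a principal bundle). Since $W$ is affine, Serre's vanishing theorem for quasi-coherent sheaves on affine schemes gives $H^1(W,\mathcal{O}_W)=0$. Hence, on the given cover (after a refinement if needed), there exist regular functions $f_i\in\mathcal{O}(W_i)$ splitting the cocycle as $g_{ij}=f_i-f_j$.

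Having split the cocycle, I would correct the local sections: replace $\sigma_i$ by $\sigma_i'=\sigma_i\cdot(-f_i)$, using the $\mathrm{G}_{\mathrm{a}}$-action on $V$. A direct check on each overlap gives $\sigma_i'=\sigma_j\cdot(g_{ij}-f_i)=\sigma_j\cdot(-f_j)=\sigma_j'$, so the $\sigma_i'$ patch together to a single algebraic morphism $\sigma\colon W\to V$ with $\pi\circ\sigma=\mathrm{id}_W$, which is the desired global cross section.

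The point that requires care — and the reason this lemma is proved rather than merely cited — is real-definedness. All of the above must be carried out over $\RR$: the local sections, the transition functions $g_{ij}$, and crucially the splitting functions $f_i$ must be $\RR$-regular. This is guaranteed because the vanishing $H^1(W,\mathcal{O}_W)=0$ holds for an affine variety over \emph{any} ground field, in particular over $\RR$, and the \v{C}ech coboundary producing the $f_i$ involves only $\RR$-linear operations on the $\RR$-regular functions $g_{ij}$. A minor technical point is the passage between \v{C}ech and derived-functor cohomology, needed so that the vanishing actually splits the cocycle on a concrete affine cover; this is harmless, since refining the cover only refines the local sections and preserves the $\RR$-structure throughout.
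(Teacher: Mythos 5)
Your proposal is correct and follows essentially the same route as the paper's proof: extract local sections, form the additive \v{C}ech $1$-cocycle of transition functions in $\mathcal{O}_W$, split it using the vanishing of $H^1$ on the affine base, and correct the local sections so they glue. The only cosmetic differences are that you cite Serre vanishing where the paper cites the vanishing of \v{C}ech cohomology directly, and that you fold the $\RR$-definedness discussion into the proof where the paper defers it to Remark~\ref{rem_R_def}.
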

\begin{proof} 
Because $W$ is affine,
it can be covered by a finite system $(U_i)_{i=1}^m$ of dense
open subsets admitting local cross sections $\sigma_i:U_i\to V$.

The action of $\mathrm{G}_{\mathrm{a}}$ is principal, which means the map
$\beta(g.p,p)=g$ defined on the graph of the action is a morphism
(Borel \cite[1.8]{borel}).
But $\mathrm{G}_{\mathrm{a}}=(\RR,+)$, so $\beta$ is in fact a regular real function
on its domain of definition. Hence we can define regular real functions
$\beta_{ij}$ on each $U_{ij}=U_i\cap U_j$ by
\[
\beta_{ij}:U_i\cap U_j\to \RR,\quad
p \mapsto \beta(\sigma_i(p),\sigma_j(p))
\]
satisfying
\[
\sigma_i|_{U_{ij}}(p)=\beta_{ij}(p).\sigma_j|_{U_{ij}}(p).
\]
These $\beta_{ij}$ form a 1-cocycle in the \v{C}ech cohomology
of the sheaf $\mathcal{O}_W$  of $\CC$-valued
regular functions on $W$.
As $W$ is affine, its first \v{C}ech cohomology group for 
$\mathcal{O}_W$ vanishes
(see for example Perrin \cite[Chapter VII, Theorem 2.5]{perrin}).
So there exist $\CC$-valued regular functions $\alpha_i$ defined on $U_i$ such that
\[
\beta_{ij} = \alpha_i|_{U_{ij}} - \alpha_j|_{U_{ij}}.
\]
The maps
\[
p\mapsto-\alpha_i(p).\sigma_i(p),\quad p\mapsto-\alpha_j(p).\sigma_j(p)
\]
are local cross sections defined on $U_i$, $U_j$, respectively,
which coincide on the open set $U_{ij}$. By continuity, they define a
morphism
\[
\sigma^{ij}:U_i\cup U_j\to V
\]
such that $\pi\circ\sigma^{ij}$ coincides with the identity on the open
subset $U_i\subset U_i\cup U_j$.
Hence $\pi\circ\sigma^{ij}=\id_{U_i\cup U_j}$ and $\sigma^{ij}$
is a local cross section defined on $U_i\cup U_j$.

Finitely many repetitions of this yield a global cross section
$\sigma:W\to V$.
\end{proof}

\begin{lem}\label{lem_trivial_bundle1}
Let $V,W$ be smooth affine varieties and $\pi:V\to W$ an algbraic
principal $G$-bundle for a unipotent algebraic group $G$.
Let $ H , A \subset G $ be as in (\ref{eq_malcev_opg}).
Then $V$ is also an algebraic
principal $ H $-bundle with base $W\times A $.
\end{lem}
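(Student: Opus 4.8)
The plan is to peel the one-dimensional quotient $A\cong G/H$ off the base, by forming the intermediate quotient $V/H$ and identifying it with $W\times A$. I would build the tower $V\to V/H\to W$, recognize $V\to V/H$ as the desired principal $H$-bundle and $V/H\to W$ as a principal $\mathrm{G}_{\mathrm{a}}$-bundle, and then trivialize the latter by Lemma \ref{lem_trivial_bundle0}.

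First I would exploit the local triviality of the $G$-bundle. Choose a cover $\{U_i\}$ of $W$ over which $\pi$ trivializes, with algebraic transition functions $g_{ij}\colon U_{ij}\to G$. By the semidirect decomposition (\ref{eq_malcev_opg}) the product map $A\times H\to G$ identifies $G$ with $A\times H$ as a variety, so each $g\in G$ has a unique factorization $g=a(g)\,h(g)$ with $a(g)\in A$, $h(g)\in H$ depending algebraically on $g$. Since $H$ is normal, passing to $G/H\cong A$ shows that on the $A$-factor the $G$-transitions reduce to translation by $a(g_{ij})$ in $A\cong\mathrm{G}_{\mathrm{a}}$, while the $H$-factor is glued by left translations in $H$. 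Gluing the charts $U_i\times A$ by these translation transitions produces $V/H$ as an affine bundle over the affine base $W$; this is a smooth affine variety, $\bar\pi\colon V/H\to W$ is an algebraic principal $\mathrm{G}_{\mathrm{a}}$-bundle, and $\pi_H\colon V\to V/H$ is an algebraic principal $H$-bundle over $W\times A$. Principality of the $H$-action is inherited from that of the $G$-action: the morphism $\beta\colon(g.x,x)\mapsto g$ furnished by principality of the $G$-action restricts, on the graph of the $H$-action, to a morphism valued in the closed subgroup $H$, because $g$ with $g.x=y$ is unique.

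Now $\bar\pi\colon V/H\to W$ satisfies the hypotheses of Lemma \ref{lem_trivial_bundle0}: the structure group is $A\cong\mathrm{G}_{\mathrm{a}}$ and both $V/H$ and $W$ are smooth affine varieties. Hence it admits an algebraic cross section and is trivial, $V/H\cong W\times A$ over $W$. Substituting this identification into the tower exhibits $V\to V/H\cong W\times A$ as an algebraic principal $H$-bundle, as claimed; its transition functions are algebraic because they are assembled from $\beta$ and from the algebraic section produced by Lemma \ref{lem_trivial_bundle0}.

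The step requiring the most care is the construction of $V/H$. One must check that the translation transitions $a(g_{ij})$ genuinely define a principal $\mathrm{G}_{\mathrm{a}}$-bundle over $W$ (this is where normality of $H$ enters, ensuring that the $A$-part of $g_{ij}\,a$ is $a(g_{ij})+a$), and that the resulting affine bundle over the affine base $W$ is honestly a smooth \emph{affine} variety, so that Lemma \ref{lem_trivial_bundle0} is applicable. Once $V/H$ is in place as a smooth affine variety with $\pi_H$ a principal $H$-bundle, the remainder of the argument is formal.
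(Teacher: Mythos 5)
Your proof is correct and follows essentially the same route as the paper: form the tower $V\to V/H\to W$, observe that $V\to V/H$ is a principal $H$-bundle and $V/H\to W$ a principal $A\cong\mathrm{G}_{\mathrm{a}}$-bundle, and trivialize the latter by Lemma \ref{lem_trivial_bundle0}. The only difference is cosmetic: the paper obtains $V/H$ by citing the cross section $G/H\to A\hookrightarrow G$ and general quotient theory, whereas you build it by gluing the local charts $U_i\times A$ via the $A$-parts of the transition functions, which amounts to the same thing.
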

\begin{proof} 
Recall that $ A \cong\mathrm{G}_{\mathrm{a}}$ and $ A \cong G / H $ as algebraic groups.
Thus there is a cross section $ G / H \to A \hookrightarrow G $.
It then follows that the quotient $V/ H $ exists as an affine variety,
$V\to V/ H $ is an algebraic principal $ H $-bundle, and that $V/ H \to W$ is a
bundle with structure group $ A $. By Lemma \ref{lem_trivial_bundle0},
$V/ H \cong W\times A $ as an algebraic principal $ A $-bundle.
\end{proof}

\begin{thm}\label{thm_trivial_bundle}
Let $V,W$ be smooth affine varieties and $\pi:V\to W$ an algebraic 
principal bundle for a unipotent algebraic group $G$.
Then $W=V/ G $ and there exists an algebraic cross section
$\sigma:V/ G \to V$.
\end{thm}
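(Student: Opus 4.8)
The natural approach is induction on $k=\dim G$, peeling off one copy of $\mathrm{G}_{\mathrm{a}}$ at a time via the Malcev decomposition (\ref{eq_malcev_opg}) and invoking Lemma \ref{lem_trivial_bundle0} and Lemma \ref{lem_trivial_bundle1} at each stage. Before constructing the section I would observe that $W=V/G$ is automatic: the fibers of a principal $G$-bundle are exactly the $G$-orbits, so $W$ is the orbit space $V/G$. It therefore remains to build an algebraic cross section $\sigma:W\to V$, and this is where the induction does its work.

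For the base case $k=1$ we have $G\cong\mathrm{G}_{\mathrm{a}}$, and Lemma \ref{lem_trivial_bundle0} supplies the cross section directly. Assume then that the theorem holds for every unipotent algebraic group of dimension less than $k$, and let $\dim G=k$. Writing $G=A\cdot H$ as in (\ref{eq_malcev_opg}), with $H$ connected normal of codimension $1$ and $A\cong\mathrm{G}_{\mathrm{a}}$, Lemma \ref{lem_trivial_bundle1} exhibits $V$ as an algebraic principal $H$-bundle over the base $W\times A$. Since $A\cong\mathrm{G}_{\mathrm{a}}$ is smooth and affine, $W\times A$ is again a smooth affine variety, and $H$ is unipotent algebraic of dimension $k-1$. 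The induction hypothesis thus applies to the $H$-bundle $V\to W\times A$ and produces an algebraic cross section $\tau:W\times A\to V$.

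To pass from $\tau$ to the desired section of $\pi$, I would restrict to a slice. The trivialization $V/H\cong W\times A$ of Lemma \ref{lem_trivial_bundle1} identifies $\pi:V\to W$ with the composition $V\to V/H\cong W\times A\to W$ whose last arrow is the projection onto the first factor, because quotienting $V$ by $G=A\cdot H$ is the same as quotienting first by $H$ and then by $A$. Fixing the origin $0\in A\cong\mathrm{G}_{\mathrm{a}}$ and letting $\iota:W\to W\times A$, $w\mapsto(w,0)$, be the induced closed embedding, I set $\sigma=\tau\circ\iota$. Then $\pi\circ\sigma=(\mathrm{proj}_W)\circ\iota=\id_W$, so $\sigma$ is an algebraic cross section $W=V/G\to V$, completing the induction.

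Each individual step is routine once the two lemmas are granted; the part that requires attention is the bookkeeping of the inductive step. Specifically, one must check that replacing $W$ by $W\times A$ keeps the base smooth and affine (so that the hypothesis genuinely applies to the smaller group $H$) and, more importantly, that the new $H$-bundle's projection to $W\times A$ composes with the $A$-bundle projection $W\times A\to W$ to recover the original map $\pi$. Verifying this compatibility of projections—equivalently, that the slice $W\times\{0\}$ maps isomorphically onto $W$—is the main, if modest, obstacle; everything else follows formally from the structure theory already in place.
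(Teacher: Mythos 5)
Your proof is correct and follows essentially the same route as the paper: induction on $\dim G$ via the decomposition (\ref{eq_malcev_opg}), with Lemma \ref{lem_trivial_bundle0} as the base case and Lemma \ref{lem_trivial_bundle1} reducing to the codimension-one normal subgroup $H$. The only cosmetic difference is that you take the zero slice $w\mapsto(w,0)$ of the trivialization $V/H\cong W\times A$ where the paper composes with the cross section $\sigma_A:(V/H)/A\to V/H$ furnished by Lemma \ref{lem_trivial_bundle0}; under that trivialization these are the same section.
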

\begin{proof} 
Let $k=\dim G $. The case $k=1$ is Lemma \ref{lem_trivial_bundle0}.
The theorem follows by induction on $k$:
Let $ H $, $ A $ denote the subgroups from (\ref{eq_malcev_opg}).

By Lemma \ref{lem_trivial_bundle1} we may apply the induction hypothesis
to $ H $, and together with Lemma \ref{lem_trivial_bundle0}, we have
global cross sections:
\begin{align*}
\sigma_{ H }&:V/ H  \to V,\\
\sigma_{ A }&:(V/ H )/ A  \to V/ H .
\end{align*}
$(V/ H )/ A =(V/ H )/( G / H )=V/G$ as affine varieties
(Borel \cite[Corollary 6.10]{borel}).
Define a morphism
$\sigma=\sigma_{ H }\circ\sigma_{ A }:V/ G \to V$.
For any orbit $G.p$ we have
\[
(\pi\circ\sigma)(G.p) = (\pi\circ\sigma_{ H }\circ\sigma_{ A })( G .p)
= (\pi\circ\sigma_{ H }\circ\sigma_{ A })( A .( H .p))
= G .p.
\]
So $\pi\circ\sigma=\id_{V/ G }$, that is $\sigma$ is a global cross section
for the action of $ G $.
\end{proof}

\begin{remark}\label{rem_R_def}
As we assumed all varieties and all morphisms to be defined over $\RR$,
the cross section $\sigma$ may be taken to be defined over $\RR$:
In the proof of Theorem
\ref{thm_trivial_bundle}, we may assume $\sigma_{ H }$ to be $\RR$-defined
by the induction hypothesis. Further, $\sigma_{ A }$ may be assumed to be
$\RR$-defined, because in the proof of Lemma \ref{lem_trivial_bundle0},
the local cross sections $\sigma_i$ can be assumed to be
$\RR$-defined by Rosenlicht's results on solvable algebraic 
groups
\cite[Theorem 10]{rosenlicht1}, and the 1-cocycles
$\alpha_i$ may be replaced by their real parts and still yield
$\alpha_i-\alpha_j=\beta_{ij}$, because the latter is an $\RR$-valued regular
function which is defined over $\RR$ if the action of $ A $ is.
\end{remark}


\begin{thebibliography}{99}

\bibitem{baues} O. Baues, {\em Flat pseudo-Riemannian manifolds and prehomogeneous affine representations}, 
in 'Handbook of Pseudo-Riemannian Geometry and Supersymmetry', EMS, IRMA Lect. Math. Theor. Phys. {16}, 2010, pp. 731-817
(also arXiv:0809.0824v1)

\bibitem{BG} O. Baues, W. Globke, {\em Flat Pseudo-Riemannian Homogeneous Spaces With Non-Abelian Holonomy Group},
Proc. Amer. Math. Soc. 140, 2012, pp. 2479-2488
(also arXiv:1009.3383)

\bibitem{borel} A. Borel,
{\em Linear Algebraic Groups}, 2nd edition,
Springer, 1991

\bibitem{corwin} L. Corwin, F.P. Greenleaf,
{\em Representations of nilpotent Lie groups and their applications},
Cambridge University Press, 1990

\bibitem{globke} W. Globke, {\em Holonomy Groups of Flat Pseudo-Riemannian Homogeneous Manifolds}, Dissertation, Karlsruhe Institute of Technology, 2011

\bibitem{globke2} W. Globke, {\em Holonomy Groups of Complete Flat Pseudo-Riemannian Homogeneous Spaces}, Adv. in Math. 240, 2013, pp. 88-105 (also arXiv:1205.3285)

\bibitem{KS} H. Kraft, G.W. Schwarz,
{\em Reductive Group Actions with One-Dimensional Quotient},
Publ. Math. Inst. Hautes \'Etudes Sci. 76, 1992, pp. 1-97

\bibitem{oneill} B. O'Neill,
{\em Semi-Riemannian Geometry},
Academic Press, 1983

\bibitem{OV} A. Onishchik, E. Vinberg,
{\em Lie Groups and Lie Algebras III},
Springer, 1994

\bibitem{perrin} D. Perrin
{\em Algebraic Geometry},
Springer, 2008

\bibitem{ragh} M.S. Raghunathan,
{\em Discrete Subgroups of Lie Groups},
Springer, 1972

\bibitem{rosenlicht1} M. Rosenlicht,
{\em Some Basic Theorems on Algebraic Groups},
Amer. J. Math. 78, no. 2, 1956, pp. 401-443



\bibitem{rosenlicht4} M. Rosenlicht,
{\em Questions of Rationality for Solvable Algebraic Groups over Non\-perfect Fields},
Ann. Mat. Pura Appl. 62, no. 1, 1963, pp. 97-120

\bibitem{steenrod} N. Steenrod,
{\em The Topology of Fibre Bundles},
Princeton Univ. Press, 1951

\bibitem{Wolf_1} J.A. Wolf, {\em Homogeneous manifolds of zero curvature},
Trans. Amer. Math. Soc. 104, 1962, pp. 462-469

\bibitem{Wolf_buch} J.A. Wolf,
{\em Spaces of Constant Curvature}, 6th edition,
Amer. Math. Soc., 2011


\end{thebibliography}
\bibliographystyle{ijmart}

\end{document}